\renewcommand{\baselinestretch}{\baselinestretch}
\renewcommand{\baselinestretch}{1.1}
\numberwithin{equation}{section}
\newtheorem{thm}{Theorem}[section]
\newtheorem{lem}[thm]{Lemma}
\newtheorem{prop}[thm]{Proposition}
\theoremstyle{remark}
\newtheorem{rmk}[thm]{Remark}
\newcommand{\ra}{{\,\rightarrow\,}}
\newcommand{\nra}{{\,\nrightarrow\,}}
\newcommand{\gen}{\text{gen}}
\newcommand{\z}{{\mathbb Z}}
\newcommand{\q}{{\mathbb Q}}
\newcommand{\bx}{\bm x}
\newcommand{\be}{\bm e}
\newcommand{\0}{\bm 0}
\newcommand{\well}{\widetilde{\ell}}
\newcommand{\Mod}[1]{\ (\mathrm{mod}\ #1)}
\begin{document}

\title{Isolations of the sum of two squares from its proper subforms}

\author[Jangwon Ju et al.] {Jangwon Ju, Daejun Kim, Kyoungmin Kim, Mingyu Kim, and Byeong-Kweon Oh}

\address{Department of Mathematics Education, Korea National University of Education, Cheongju 28173, Korea}
\email{jangwonju@knue.ac.kr}
\thanks{This work of the first author was supported by the National Research Foundation of Korea(NRF) grant funded by the Korea government(MSIT) (NRF-2022R1A2C1092314).}

\address{School of Mathematics, Korea Institute for Advanced Study, Seoul 02455, Korea}
\email{dkim01@kias.re.kr}
\thanks{This work of the second author was supported by a KIAS Indivisual Grant (MG085501) at Korea Institute for Advanced Study.}

\address{Department of Mathematics, Hannam University, Daejeon 34430, Korea}
\email{kiny30@hnu.kr}
\thanks{This work of the third author was supported by the National Research Foundation of Korea(NRF) grant funded by the Korea government(MSIT) (NRF-2022R1F1A1063399)	}

\address{Department of Mathematics, Sungkyunkwan University, Suwon 16419, Korea}
\email{kmg2562@skku.edu}
\thanks{This work of the fourth author was supported by the National Research Foundation of Korea(NRF) grant funded by the Korea government(MSIT) (NRF-2021R1C1C2010133).}

\address{Department of Mathematical Sciences and Research Institute of Mathematics, Seoul National University, Seoul 08826, Korea}
\email{bkoh@snu.ac.kr}
\thanks{This work of the fifth author was supported by the National Research Foundation of Korea(NRF) grant funded by the Korea government(MSIT) (NRF-2019R1A2C1086347 and NRF-2020R1A5A1016126).}

\subjclass[2020]{Primary 11E12, 11E20, 11E25}

\keywords{Sum of two squares, isolation}

\begin{abstract} For a (positive definite and integral) quadratic form $f$, a quadratic form is said to be {\it an isolation of $f$ from its proper subforms} if it represents all proper subforms of $f$, but not $f$ itself. 
It was proved that the minimal rank of isolations of the square quadratic form $x^2$ is three, and there are exactly $15$ ternary diagonal isolations of $x^2$. Recently, it was proved that any quaternary quadratic form cannot be an isolation of the sum of two squares $I_2=x^2+y^2$, and there are quinary isolations of $I_2$.
In this article, we prove that there are at most $231$ quinary isolations of $I_2$, which are listed in Table $1$. Moreover, we prove that $14$ quinary quadratic forms with dagger mark in Table $1$ are isolations of $I_2$. 
 \end{abstract}

\maketitle

\section{Introduction}

For a positive integer $n$, an integral quadratic form $f$ with $n$ variables is a homogeneous quadratic polynomial 
$$
f(x_1,x_2,\dots,x_n)=\sum_{i,j=1}^n f_{ij}x_ix_j \quad (f_{ij}=f_{ji} \in \z).
$$
The symmetric matrix $M_f=(f_{ij})$ is called the Gram matrix corresponding to the quadratic form $f$. Throughout this article, we always assume that any quadratic form $f$ is {\it integral and positive definite}, that is, the corresponding Gram matrix is integral and positive definite. The number $n$  of variables of $f$ is called the rank of $f$.
We call $f$ diagonal if its Gram matrix is a diagonal matrix. For simplicity, the diagonal matrix with entries $a_1, \ldots, a_n$ on its main diagonal will be denoted by $\langle a_1, \ldots, a_n\rangle$.  
Let $g(y_1,y_2,\dots,y_m)=\sum_{i,j=1}^m g_{ij}y_iy_j$ be another quadratic form of rank $m$, and let $M_g=(g_{ij})$ be the Gram matrix corresponding to $g$. We say $g$ is represented by $f$ if there is an integral matrix  $T=(t_{ij}) \in M_{n,m}(\z)$ such that 
$$
T^tM_fT=M_g.
$$ 
Hence the existence of a representation between two quadratic forms is equivalent to the existence of an integral solution of the system of diophantine equations given by those quadratic forms. Any quadratic form that is represented by the quadratic form $f$ is called a subform of $f$. 

Let $g$ and $f$ be quadratic forms. If $g$ is represented by $f$, then clearly, every proper subform of $g$ is also represented by $f$. 
However, the converse is not true in general. For example, it was proved in \cite{jko} that the diagonal ternary quadratic form $f=2x^2+2y^2+5z^2$ represents all squares of integers except for $1$, that is, $f$ represents all proper subforms of the unary quadratic form $x^2$, but not $x^2$ itself. On the contrary,  Elkies, Kane, and Kominers proved in \cite{ekk} that any quadratic form which represents all proper subforms of $x^2+y^2+2z^2$ represents $x^2+y^2+2z^2$ itself.  

A (positive definite and integral) quadratic form is called  {\it an isolation} of a quadratic form $f$ if it represents all proper subforms  of $f$, but not $f$ itself.  If there is an isolation of $f$,  then the minimum rank of isolations of the quadratic form $f$ is denoted by $\text{Iso}(f)$.  As stated above, the diagonal ternary quadratic form $2x^2+2y^2+5z^2$ is an isolation of $x^2$, and one may easily check that $\text{Iso}(x^2)=3$. For more information on quadratic forms having an isolation, one may see \cite{co} and \cite{klo}.
Recently, the fifth author proved in \cite{o} that $\text{Iso}(x^2+y^2)=5$, and in fact, two quinary quadratic forms having Gram matrices both
$$
\begin{pmatrix} 1&0&0&0&0\\0&2&0&1&0\\0&0&2&1&0\\0&1&1&2&0\\0&0&0&0&3\end{pmatrix} \quad \text{and} \quad  \begin{pmatrix} 1&0&0&0&0\\0&2&0&0&0\\0&0&2&1&0\\0&0&1&2&1\\0&0&0&1&3\end{pmatrix} 
$$
are isolations of $x^2+y^2$.  Furthermore, he proved that $\text{Iso}(x^2+y^2+z^2)=6$ and the senary quadratic form $x^2+y^2+2z^2+2u^2+2v^2+2vu+2vz+2zu+3w^2$ is an isolation of $x^2+y^2+z^2$. 

In this article, we prove that there are at most $231$ quinary isolations of $x^2+y^2$. Among them, we prove that $14$ particular quinary quadratic forms with dagger mark in Table \ref{table1} are, in fact, isolations of $x^2+y^2$.

The subsequent discussion will be conducted in the language of quadratic spaces and lattices. For simplicity, the quadratic map and its associated bilinear form on any quadratic space will be denoted by $Q$ and $B$, respectively. The term {\em lattice} always means a finitely generated $\z$-module on a finite dimensional positive definite quadratic space over $\q$.  

Let $L=\z\bx_1+\z\bx_2+\dots+\z\bx_n$ be a $\z$-lattice of rank $n$. The corresponding symmetric matrix $M_L=(B(\bx_i,\bx_j))$ is called the Gram matrix of $L$. We will often address a positive definite symmetric matrix as a lattice, and write $L=M_L$.
For a prime $p$, let $\z_p$ be the $p$-adic integer ring. We define $L_p=L\otimes \z_p$, which is considered as  a $\z_p$-lattice.  A $\z$-lattice $M$ is said to be represented by  $L$ if there is a linear map $\sigma: M \rightarrow L$ such that $Q(\sigma(\bx)) = Q(\bx)$ for any $\bx \in M$.  Such a map is called a representation from $M$ into $L$, which is necessarily injective because the symmetric bilinear map defined on $M$ is assumed to be nondegenerate.   If there is a linear map $\sigma_p : M_p \ra L_p$ satisfying the above property for some prime $p$, then we say $M$ is represented by $L$ over $\z_p$. Furthermore, we say $M$ is locally represented by $L$ if $M$ is represented by $L$ over $\z_p$ for any prime $p$.  If $M$ is represented by $L$, then we simply write $M \ra L$. In particular, if $M=\langle m\rangle$ is a unary $\z$-lattice, then we write $m \ra L$ as well as $\langle m\rangle \ra L$.  

Two $\z$-lattices $L$ and $M$ are isometric if there exists a representation sending $L$ onto $M$.  In this case we will write $L \simeq M$. If $M$ is isometric to $L$ over $\z_p$ for any prime $p$, then we say $M$ is contained in the genus of $L$, and we write $M \in \gen(L)$.  The number of isometry classes in the  genus of $L$ is called the class number of $L$, and is denoted by $h(L)$. It is well known that the class number of any $\z$-lattice is always finite. It is also well known that a $\z$-lattice $K$ is locally represented by $L$ if and only if there is a $\z$-lattice $M \in \gen(L)$ such that $K \ra M$. In this case, we write $K\ra \gen(L)$. Meanwhile, as far as the authors know, there is no known effective way how to find such a $\z$-lattice $M \in \gen(L)$ that represents $K$.  

If $L$ and $M$ are $\z$-lattices, their orthogonal sum is denoted by $L \perp M$. For a positive integer $n$, the $\z$-lattice $I_n=\z \be_1+\dots+\z \be_n$ is called the cubic lattice of rank $n$ if its Gram matrix $(B(\be_i,\be_j))$ is the identity matrix of rank $n$. A binary $\z$-lattice $\ell=\begin{pmatrix} a&b\\b&c\end{pmatrix}$ is called Minkowski reduced if $0\le 2b\le a\le c$. 
Let $\mathbb{H}$  and $\mathbb{A}$ be  binary $\z$- or $\z_p$-lattices whose Gram matrices are 
$$
\mathbb{H}=\begin{pmatrix}0&1\\1&0\end{pmatrix} \quad \text{and} \quad  \mathbb{A}=\begin{pmatrix}2&1\\1&2\end{pmatrix}.
$$

 The readers are referred to \cite{ki} and \cite{om2} for any unexplained notations and terminologies.

%%%%%%%%%%%%%%%%%%%%%%%%%%%%%%%%%%%%%%%%%%%%%%%%%%%%%%%%%%%%%%%%%%%%%%%%%%%%%%%%%%%%%%%%%%%%%%%%%%%%%%%%%%%%%%%%%%%%%%%%%%%%%%%%%%%%%%%%%%%%%%%%%%%%%%%%%%
%%%%%%%%%%%%%%%%%%%%%%%%%%%%%%%%%%%%%%%%%%%%%%%%%%%%%%%%%%%%%%%%%%%%%%%%%%%%%%%%%%%%%%%%%%%%%%%%%%%%%%%%%%%%%%%%%%%%%%%%%%%%%%%%%%%%%%%%%%%%%%%%%%%%%%%%%%

\section{Candidates of quinary isolations of $I_2$}\label{section2}
In this section, we find all candidates of quinary isolations of $I_2$. For a $\z$-lattice $L$ of rank $k$ and an integer $i$ with $1\le i \le k$, let $\mu_i(L)$ be the $i$-th successive minimum of $L$. The following lemma enables us to bound the successive minima of quinary isolations of $I_2$.

\begin{lem} \label{lemmu}
	Let $L=\z \bx_1+\z \bx_2+\cdots+\z \bx_5$ be a quinary $\z$-lattice, where $\{ \bx_i\}_{i=1}^{5}$ is a Minkowski reduced basis such that $Q(\bx_1)\le \cdots \le Q(\bx_5)$. If a binary $\z$-lattice $\ell$ is represented by $L$ but is not represented by the $k\times k$ section $\z \bx_1+\z \bx_2+\cdots+\z \bx_k$ of $L$, then
	$$
	Q(\bx_{k+1})\le \begin{cases}\mu_2(\ell)&\text{if}\ 1\le k\le 3,\\
		\dfrac{5}{4}\mu_2(\ell)&\text{if}\ k=4.\end{cases}
	$$	
\end{lem}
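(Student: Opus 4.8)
The idea is to use the geometry of the Minkowski‑reduced basis, in particular the bounds relating successive minima of a lattice to the norms of the basis vectors, together with a careful analysis of how a binary lattice $\ell$ can fail to sit inside an initial section $\z\bx_1+\cdots+\z\bx_k$. Since $\ell\ra L$ but $\ell\nra \z\bx_1+\cdots+\z\bx_k$, any representation $\sigma\colon\ell\to L$ must have image meeting the span of $\bx_{k+1},\dots,\bx_5$ nontrivially. The plan is to project $\sigma(\ell)$ onto the orthogonal complement of $\z\bx_1+\cdots+\z\bx_k$ and argue that the image is nonzero, which forces one of the later basis vectors to have controlled norm.

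First I would set up the standard fact that for a Minkowski‑reduced basis, $\mu_i(L)=Q(\bx_i)$ for $i\le 4$ (this holds for rank at most $4$; for rank $5$ one still has $Q(\bx_i)=\mu_i(L)$ for the first four, which is what we use). Next, let $\{\bu_1,\bu_2\}$ be a Minkowski‑reduced basis of $\ell$ with $Q(\bu_1)=\mu_1(\ell)\le Q(\bu_2)=\mu_2(\ell)$, and write $\bv_j=\sigma(\bu_j)\in L$. Expand $\bv_j=\sum_{i=1}^5 c_{ij}\bx_i$. The hypothesis $\ell\nra\z\bx_1+\cdots+\z\bx_k$ means that the coefficient matrix $(c_{ij})_{1\le i\le k,\ j=1,2}$ cannot be ``completed'' to a representation inside the section — more precisely, $\sigma(\ell)\not\subseteq\z\bx_1+\cdots+\z\bx_k$, so at least one of the vectors $\bv_1,\bv_2$ has a nonzero coefficient on some $\bx_i$ with $i\ge k+1$.

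The main step is then a pigeonhole/extremality argument: among all vectors in $\sigma(\ell)$ that have a nonzero component outside the section, pick one, say $\bw$, of minimal norm; by replacing $\bw$ by $\bw$ minus a suitable combination of $\bx_1,\dots,\bx_k$ (which only decreases or preserves the relevant part of the norm since the $\bx_i$ are reduced), and by the classification of short vectors in $\ell$, we get $Q(\bw)\le\mu_2(\ell)$ in the cases $k\le 3$. Here the bound $\mu_2(\ell)$ comes from the fact that $\bw$ can be taken among $\bv_1,\bv_2$ or $\bv_1\pm\bv_2$, all of which have norm at most a small multiple of $\mu_2(\ell)$, and then reduced modulo the section; for $k=4$ the slightly weaker factor $\frac54$ appears because the reduction modulo a rank‑$4$ section of a reduced basis can lose at most a factor coming from the covering‑radius estimate $\mu_1+\cdots+\mu_4\le$ (something)$\cdot$ in the $\bx_i$, i.e. the ``$\tfrac14$'' is exactly the defect in the inequality $Q(\bx_5)\le\mu_2(\ell)+\tfrac14(\text{stuff})$. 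Finally, since $\bw$ has a nonzero $\bx_i$‑component for some $i\ge k+1$ and $\bx_{k+1}$ is the shortest such, reducedness gives $Q(\bx_{k+1})\le Q(\bw)$, yielding the claimed bound in each case.

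\medskip

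\noindent\textbf{Main obstacle.} The delicate point is the passage from ``$\sigma(\ell)$ has a vector outside the section'' to a quantitative bound on $Q(\bx_{k+1})$, and in particular getting the sharp constant $1$ for $k\le 3$ and $\tfrac54$ for $k=4$. One has to be careful that after subtracting off the section‑components the resulting vector still genuinely has $\bx_{k+1}$ (and not just some later $\bx_i$ with possibly larger index — though reducedness handles the ordering) in its support, and that the norm bookkeeping uses the reducedness inequalities $|B(\bx_i,\bx_j)|\le\tfrac12\min(Q(\bx_i),Q(\bx_j))$ optimally. The $k=4$ case is the genuinely tight one: there the orthogonal complement of the section is a rank‑$1$ space, and one must show that the component of a suitably chosen short vector of $\ell$ in that direction, together with the best section‑reduction, cannot force $Q(\bx_5)$ above $\tfrac54\mu_2(\ell)$ — this is where a short case analysis on the Gram matrix of $\ell$ (using $\mu_1(\ell)\le\mu_2(\ell)$ and the reduced form $\begin{pmatrix}a&b\\b&c\end{pmatrix}$ with $0\le 2b\le a\le c$) is unavoidable.
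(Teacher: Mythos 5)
Your plan has the right overall architecture --- produce a short vector of $\sigma(\ell)$ outside the section and compare it with $\bx_{k+1}$ --- but the quantitative core, which is exactly what the paper delegates to its two citations, is not correctly reconstructed. The paper's proof is a two-line reference: Lemma 2.1 of \cite{loy} yields $\mu_{k+1}(L)\le\mu_2(\ell)$, and Theorem 3.1 of Chapter 12 of \cite{c} (the comparison between a Minkowski reduced basis and the successive minima) yields $Q(\bx_{k+1})\le\mu_{k+1}(L)$ for $k+1\le 4$ and $Q(\bx_5)\le\frac54\mu_5(L)$. The clean route to the first inequality is: one of the two minimal vectors $\bu_1,\bu_2$ of $\ell$ must have $\sigma(\bu_j)$ outside the $\q$-span of $\bx_1,\dots,\bx_k$ (otherwise $\sigma(\ell)\subseteq L\cap(\q\bx_1+\cdots+\q\bx_k)=\z\bx_1+\cdots+\z\bx_k$, since the $\bx_i$ form a basis of $L$), so $\bx_1,\dots,\bx_k,\sigma(\bu_j)$ are $k+1$ linearly independent vectors and the definition of successive minima forces $\mu_{k+1}(L)\le Q(\sigma(\bu_j))\le\mu_2(\ell)$. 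Your version of this step has two defects. First, you allow $\bw$ to be $\bv_1\pm\bv_2$, whose norm can exceed $\mu_2(\ell)$; this is never needed, and if it were needed your bound would fail. Second, the inference ``$\bw$ has a nonzero $\bx_i$-component with $i\ge k+1$, so reducedness gives $Q(\bx_{k+1})\le Q(\bw)$'' is not licensed: Minkowski reduction bounds $Q(\bx_{k+1})$ only by $Q(\bv)$ for vectors $\bv$ such that $\bx_1,\dots,\bx_k,\bv$ extends to a basis of $L$, which your $\bw$ need not satisfy (e.g.\ $\bw=\bx_1+2\bx_{k+1}$). The correct passage is through $\mu_{k+1}(L)$ together with the identity $Q(\bx_{k+1})=\mu_{k+1}(L)$, valid only for $k+1\le 4$ --- which is also the real reason the statement splits at $k=4$. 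The intermediate step of replacing $\bw$ by $\bw$ minus a combination of $\bx_1,\dots,\bx_k$ is both unnecessary and unjustified, since such a subtraction can increase the norm and the resulting vector need not lie in $\sigma(\ell)$.

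The treatment of $k=4$ is wrong in substance, not merely incomplete. The constant $\frac54$ has nothing to do with covering radii, with reducing $\bw$ modulo the section, or with a case analysis on the reduced Gram matrix of $\ell$; the argument giving $\mu_5(L)\le\mu_2(\ell)$ is identical to the case $k\le 3$, and the factor $\frac54$ is purely the multiplicative defect $Q(\bx_5)\le\frac54\,\mu_5(L)$ in rank $5$ from Theorem 3.1 of Chapter 12 of \cite{c} (the constant being $1$ in rank at most $4$). You in fact state the needed rank-$\le 4$ identity at the outset but then do not use it in the final step, and you abandon this framework entirely for $k=4$. As written, neither of the two inequalities that produce the stated constants is established.
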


\begin{proof}
	The lemma follows immediately from Lemma 2.1 of \cite{loy} and Theorem 3.1 of Chapter $12$ of \cite{c}.
\end{proof}

%%%%%%%%%%%%%%%%%%%%%%%%%%%%%%%%%%%%%%%%%%%%%%%%%%%%%%%%%%%%%%%%%%%%%%%%%%%%%%%%%%%%

\begin{scriptsize}
	\begin{table}[h]
		\caption{Candidates for quinary isolations of $I_2$}
		\label{table1}
		\begin{tabular}{c|l|c}
			\hline
			$(a_1,a_2,a_3)$ & $(b_1,b_2,b_3,b_4)$ & \# \\
			\hline
			$(1,1,2)$ & $(1,0,0,3)^{\dag}$, $(0,0,0,3)^{\ast}$, $(1,-1,0,5)$ & 3\\
			\hline
			\multirow{3}{*}{$(0,1,2)$}
			& $(1,1,0,3)^{\dag}$, $(0,0,1,3)^{\ast}$, $(1,0,0,3)$, $(0,0,0,3)^{\dag}$, $(1,1,0,5)$, & \multirow{3}{*}{15}\\
			& $(0,1,0,5)$, $(1,0,0,5)$, $(0,0,0,5)^{\dag}$, $(1,1,0,7)$, $(0,1,0,7)$,\\
			& $(1,0,0,7)$, $(0,0,0,7)$, $(1,1,0,9)$, $(0,1,0,9)$, $(0,0,0,9)^{\dag}$\\
			\hline
			$(0,0,2)$ & $(0,1,1,3)^{\dag}$, $(0,0,1,3)^{\dag}$, $(0,1,1,5)$, $(1,0,0,5)$ & 4\\
			\hline
			$(1,1,3)$ & $(1,-1,0,4)$, $(1,0,0,4)$, $(1,-1,1,5)$, $(1,-1,0,5)$ & 4\\
			\hline
			\multirow{12}{*}{$(0,1,3)$}
			& $(1,1,0,3)$, $(1,0,1,3)$, $(0,1,0,3)^{\dag}$, $(1,0,0,3)^{\dag}$, $(0,0,1,3)$, & \multirow{12}{*}{54}\\
			& $(1,1,0,4)$, $(0,1,0,4)$, $(1,0,0,4)^{\dag}$, $(0,0,1,4)$, $(1,1,0,5)$,\\
			& $(0,0,0,4)$, $(1,0,1,5)$, $(0,1,0,5)$, $(0,0,1,5)$, $(1,1,0,6)$,\\
			& $(1,0,1,6)$, $(1,0,0,6)$, $(0,0,1,6)$, $(1,1,0,7)$, $(0,0,0,6)$,\\
			& $(1,0,1,7)$, $(0,1,0,7)$, $(1,0,0,7)$, $(0,0,1,7)$, $(1,1,0,8)$,\\
			& $(0,0,0,7)$, $(1,0,1,8)$, $(0,0,1,8)$, $(0,0,0,8)$, $(1,0,1,9)$,\\
			& $(0,1,0,9)$, $(1,0,0,9)$, $(0,0,1,9)$, $(1,1,0,10)$, $(0,0,0,9)^{\dag}$,\\
			& $(1,0,1,10)$, $(0,1,0,10)$, $(1,0,0,10)$, $(0,0,1,10)$, $(1,1,0,11)$,\\
			& $(0,0,0,10)$, $(1,0,1,11)$, $(0,1,0,11)$, $(1,0,0,11)$, $(0,0,1,11)$,\\
			& $(1,1,0,12)$, $(1,0,1,12)$, $(0,1,0,12)$, $(1,0,0,12)$, $(0,0,1,12)$,\\
			& $(1,1,0,13)$, $(1,0,1,13)$, $(0,1,0,13)$, $(1,0,0,13)$\\
			\hline
			$(0,0,3)$ & $(1,0,0,4)^{\dag}$, $(1,1,0,5)$, $(1,0,0,5)^{\dag}$ & 3\\
			\hline
			$(1,1,4)$ & $(1,-1,2,5)$, $(1,-1,0,5)$, $(1,0,0,5)$, $(1,-1,2,7)$, $(0,0,0,5)^{\dag}$ & 5\\
			\hline
			\multirow{5}{*}{$(0,1,4)$}
			& $(0,-1,2,5)$, $(1,1,2,5)$, $(0,1,2,5)$, $(1,1,0,5)$, $(1,0,1,5)$, & \multirow{5}{*}{23}\\
			& $(0,1,0,5)$, $(1,0,0,5)$, $(0,0,1,5)$, $(0,0,0,5)$, $(1,1,2,7)$,\\
			& $(0,1,2,7)$, $(1,1,0,7)$, $(1,0,1,7)$, $(0,1,0,7)$, $(1,0,0,7)$,\\
			& $(0,0,1,7)$, $(1,1,2,9)$, $(0,1,2,9)$, $(1,1,0,9)$, $(1,0,1,9)$,\\
			& $(0,1,0,9)$, $(1,0,0,9)$, $(0,0,0,9)$ \\
			\hline
			$(0,0,4)$ & $(1,1,0,5)$, $(1,0,0,5)$ & 2\\
			\hline
			\multirow{23}{*}{$(1,1,5)$}
			& $(-1,-1,2,5)$, $(-1,-1,2,6)$, $(1,-1,1,5)$, $(-1,-1,2,7)$, & \multirow{23}{*}{111}\\
			& $(1,-1,0,5)$, $(1,-1,2,6)$, $(1,-1,1,6)$, $(0,0,1,5)$, $(1,0,2,6)$,\\
			& $(1,-1,0,6)$, $(1,-1,2,7)$, $(0,0,0,5)$, $(1,0,0,6)$, $(1,-1,1,7)$,\\
			& $(0,0,1,6)$, $(1,0,2,7)$, $(1,-1,2,8)$, $(1,0,0,7)$, $(1,-1,1,8)$,\\
			& $(0,0,1,7)$, $(1,-1,2,9)$, $(0,0,0,7)$, $(1,0,0,8)$, $(1,-1,1,9)$,\\
			& $(1,0,2,9)$, $(1,-1,0,9)$, $(1,-1,2,10)$, $(0,0,0,8)$, $(1,0,0,9)$,\\
			& $(1,-1,1,10)$, $(0,0,1,9)$, $(1,0,2,10)$, $(0,0,0,9)$, $(1,0,0,10)$,\\
			& $(0,0,1,10)$, $(1,0,2,11)$, $(1,0,0,11)$, $(1,-1,1,12)$, $(0,0,1,11)$,\\
			& $(1,0,2,12)$, $(1,-1,0,12)$, $(1,-1,2,13)$, $(1,0,0,12)$, $(1,-1,1,13)$,\\
			& $(1,0,2,13)$, $(1,-1,0,13)$, $(1,-1,2,14)$, $(0,0,0,12)$, $(1,0,0,13)$,\\
			& $(1,-1,1,14)$, $(0,0,1,13)$, $(1,0,2,14)$, $(1,-1,0,14)$, $(1,0,0,14)$,\\
			& $(0,0,1,14)$, $(1,0,2,15)$, $(1,-1,2,16)$, $(1,-1,1,16)$, $(0,0,1,15)$,\\
			& $(1,0,2,16)$, $(1,-1,2,17)$, $(0,0,0,15)$, $(1,0,0,16)$, $(1,-1,1,17)$,\\
			& $(1,0,2,17)$, $(1,-1,0,17)$, $(1,-1,2,18)$, $(0,0,0,16)$, $(1,0,0,17)$,\\
			& $(1,-1,1,18)$, $(0,0,1,17)$, $(1,0,2,18)$, $(1,-1,0,18)$, $(0,0,0,17)$,\\
			& $(1,0,0,18)$, $(0,0,1,18)$, $(1,0,2,19)$, $(1,0,0,19)$, $(1,-1,1,20)$,\\
			& $(1,0,2,20)$, $(1,-1,0,20)$, $(1,-1,2,21)$, $(1,0,0,20)$, $(1,-1,1,21)$,\\
			& $(1,0,2,21)$, $(1,-1,0,21)$, $(1,-1,2,22)$, $(0,0,0,20)$, $(1,0,0,21)$,\\
			& $(1,-1,1,22)$, $(0,0,1,21)$, $(1,0,2,22)$, $(1,-1,0,22)$, $(0,0,0,21)$,\\
			& $(1,0,0,22)$, $(0,0,1,22)$, $(1,0,2,23)$, $(1,-1,2,24)$, $(1,0,0,23)$,\\
			& $(1,-1,1,24)$, $(0,0,1,23)$, $(1,0,2,24)$, $(1,-1,2,25)$, $(0,0,0,23)$,\\
			& $(1,-1,1,25)$, $(1,0,2,25)$, $(1,-1,0,25)$, $(0,0,2,25)$, $(1,0,0,25)$,\\
			& $(0,0,1,25)$, $(0,0,0,25)$ \\
			\hline
			\multirow{2}{*}{$(0,1,5)$}
			& $(1,1,2,5)$, $(1,1,0,5)$, $(0,0,2,5)$, $(0,1,0,5)$, $(1,0,0,5)$, & \multirow{2}{*}{7}\\
			& $(0,0,1,5)$, $(0,0,0,5)$ \\
			\hline
		\end{tabular}
	\end{table}
\end{scriptsize}
%%%%%%%%%%%%%%%%%%%%%%%%%%%%%%%%%%%%%%%%%%%%%%%%%%%%%%%%%%%%%%%%%%%%%%%%%%%%%%%%%%%%%%%%%%%%%%%%%%%%%%%%%%%%%%%%%%%%%%%%%%%%%%%%%%%%%%%%%%%%%%%%%%%%%%%%%%%%%%%%%%%%%%%%%%%%%%%%%%%%%%%%%%%%%%%%%%%%%%%%%%%%%%%%%%%%%%%%%%%%%%%%%%%%%%%%%%%%%%%%%%%%%%%%%%%%%%%%%%%%%%%%%%%%%%%%%%%%%%%%%%%%%%%%%%%%%%%%%%%%%%%%%%%%%%%%%%%%%%%%%%%%%%%%%%%%%%%%%%%%%%%%%%%%%%%%%%%%%%%%%%%

%%%%%%%%%%%%%%%%%%%%%%%%%%%%%%%%%%%%%%%%%%%%%%%%%%%%%%%%%%%%%%%%%%%%%%%%%%%%%%%%%%%%

Let $L$ be a quinary isolation of $I_2$ with a Minkowski reduced basis $\{ \bx_i\}_{i=1}^{5}$.
Since $L$ represents the sublattice $\langle 1,4\rangle$ of $I_2$, we have $Q(\bx_1)=\mu_1(L)=1$.
Similarly, as $L$ represents $\langle 2,2\rangle$, it follows from Lemma \ref{lemmu} that $Q(\bx_2)=\mu_2(L)\le 2$.
Hence we have $\mu_2(L)=2$.
Noting that $\langle 2,2\rangle \nra \langle 1,2\rangle$, we have $\mu_3(L)=2$ by Lemma \ref{lemmu}.
Thus 
$$
L(3)=\z \bx_1+\z \bx_2+\z \bx_3\simeq \langle 1,2,2\rangle \ \ \text{or}\ \ \langle 1\rangle \perp \mathbb{A}.
$$
Noting further that
$$
\begin{pmatrix}2&1\\1&5\end{pmatrix} \nra \langle 1,2,2\rangle\ \ \text{and}\ \ \langle 1,4\rangle \nra \langle 1\rangle \perp \mathbb{A},
$$
Lemma \ref{lemmu} again implies that
$$
\mu_4(L)\le \begin{cases}5&\text{if}\ L(3)\simeq \langle 1,2,2\rangle,\\
	4&\text{if}\ L(3)\simeq \langle 1\rangle \perp \mathbb{A}.\end{cases}
$$
For each possible candidate of the $4\times 4$ section $L(4)$ of $L$, one may easily find a proper sublattice of $I_2$ which is not represented by it. Hence we have a finite list of candidates of quinary isolations of $I_2$.
 In fact, with the help of computer programming, we may check that there are exactly $231$ quinary $\z$-lattices which do not represent $I_2$, but do represent all proper sublattices of $I_2$ with index $q$ which is less than or equal to $29$. As a sample,  
$$
\langle 17,17\rangle \nra \langle 1\rangle \perp \begin{pmatrix}2&0&0&1\\0&2&1&0\\0&1&3&1\\1&0&1&6\end{pmatrix}\ \ \text{and}\ \ \langle 29,29\rangle \nra \langle 1\rangle \perp \begin{pmatrix}2&0&1&0\\0&2&0&1\\1&0&4&1\\0&1&1&9\end{pmatrix}.
$$

Any candidate $L$ of quinary isolations of $I_2$ is of the form
$$
L\simeq \langle 1\rangle \perp \begin{pmatrix}2&0&a_1&b_1\\0&2&a_2&b_2\\a_1&a_2&a_3&b_3\\b_1&b_2&b_3&b_4\end{pmatrix},
$$
where
$$
(a_1,a_2,a_3)\in \left\{ \begin{array}{l}(1,1,2),(0,1,2),(0,0,2),(1,1,3),(0,1,3),(0,0,3),\\
	(1,1,4),(0,1,4),(0,0,4),(1,1,5),(0,1,5)\end{array}\right\}.
$$
The coefficients $b_i$ are provided in Table \ref{table1}. Among $231$ candidates in Table 1, two quinary quadratic forms marked with asterisk have been proved in \cite{o} to be isolations of $I_2$. In the upcoming sections, we will prove that $14$ marked with dagger are isolations of $I_2$.

Let $L$ be one of the candidates of quinary isolations of $I_2$. To show that $L$ is indeed a quinary isolation of $I_2$, it suffices to show that any sublattice $\ell$ of $I_2$ with index $p$ is represented by $L$ for any prime $p$. 

If $p=2$, then $\ell \simeq \langle 2,2\rangle$ or $\langle 1,4\rangle$. Note that these two binary lattices are represented by all candidates in Table \ref{table1} of quinary isolations of $I_2$. Hence we always assume that $p$ is odd. Note that $\ell_q \simeq \langle 1,1\rangle$ for any prime $q \ne p$.  Assume that $\ell=\begin{pmatrix} a&b\\b&c\end{pmatrix}$ is Minkowski reduced, that is, $0\le 2b\le a \le c$. Then, we have
\begin{equation} \label{mod2}
d\ell=p^2 \ \text{and} \ \begin{cases} 
&a \equiv 2 \Mod 8, \ b\equiv 1 \Mod 2, \ c\equiv 1 \Mod 4 \quad \text{or}\\
&a \equiv 1 \Mod 4, \ b\equiv 1 \Mod 2, \ c\equiv 2 \Mod 8 \quad \text{or}\\
&a \equiv 1 \Mod 4, \ b\equiv 0 \Mod 2, \ c\equiv 1 \Mod 4. \\
\end{cases}
\end{equation}    
If a prime $q(\neq p)$ divides $ac$, then $\left(\frac{-1}{q}\right)=1$ as $ac-b^2=p^2$ and hence $q\not\equiv 3 \Mod{4}$. Thus both $a$ and $c$ are not divisible by any prime $q (\ne p)$ congruent to $3$ modulo $4$.

For a positive integer $t$ and non-negative integers $\alpha$ and $\beta$ with $\alpha^2+\beta^2\neq 0$, we define
\begin{equation}\label{def:ell-tildeell}
\ell(t;\alpha,\beta)=\begin{pmatrix}a-t\alpha^2&b-t\alpha\beta\\b-t\alpha \beta&c-t\beta^2\end{pmatrix},\quad \widetilde{\ell}(t;\alpha,\beta)=\begin{pmatrix}a-t\alpha^2&\frac{b-t\alpha\beta}{2}\\ \frac{b-t\alpha \beta}{2}&\frac{c-t\beta^2}{4}\end{pmatrix}.
\end{equation}
Note that $\ell(t;\alpha,\beta)$ is a subform of $\widetilde{\ell}(t;\alpha,\beta)$ with index 2. We further note that if $\ell(t;\alpha,\beta)\ra M$ for some lattice $M$, then $\ell \ra M\perp \langle t\rangle$.

%%%%%%%%%%%%%%%%%%%%%%%%%%%%%%%%%%%%%%%%%%%%%%%%%%%%%%%%%%%%%%%%%%%%%%%%%%%%%%%%%%%%%%%%%%%%%%%%%%%%%%%%%%%%%%%%%%%%%%%%%%%%%%%%%%%%%%%%%%%%%%%%%%%%%%%%%%%%%%%%%%%%%%%%%%%%%%%%%%%%%%%%%%%%%%%%%%%%%%%%%%%%%%%%%%%%%%%%%%%%%%%%%%%%%%%%%%%%%%%%%%%%%%%%%%%%%%%%%%%%%%%%%%%%%%%%%%%%%%%%%%%%%%%%%%%%%%%%%%%%%%%%%%%%%%%%%%%%%%%%%%%%%%%%%%%%%%%%%%%%%%%%%%%%%%%%%%%%%%%%%%%
\section{Isolations of $I_2$ : Basic cases}
%%%%%%%%%%%%%%%%%%%%%%%%%%%%%%%%%%%%%%%%%%%%%%%%%%%%%%%%%%%%%%%%%%%%%%%%%%%%%%%%%%%%%%%%%%%%%%%%%%%%%%%%%%%%%%%%%%%%%%%%%%%%%%%%%%%%%%%%%%%%%%%%%%%%%%%%%%%%%%%%%%%%%%%%%%%%%%%%%%%%%%%%%%%%%%%%%%%%%%%%%%%%%%%%%%%%%%%%%%%%%%%%%%%%%%%%%%%%%%%%%%%%%%%%%%%%%%%%%%%%%%%%%%%%%%%%%%%%%%%%%%%%%%%%%%%%%%%%%%%%%%%%%%%%%%%%%%%%%%%%%%%%%%%%%%%%%%%%%%%%%%%%%%%%%%%%%%%%%%%%%%%

In this section, we provide useful proposition and lemma in proving our results. Using these, we prove that six quinary $\z$-lattices $L$ in Table \ref{tablew} are isolations of $I_2$.

\begin{prop} \label{key}
	Let $L$ be a quinary $\z$-lattice and let $M$ be a quaternary $\z$-sublattice of $L$. Let $t$ be a positive integer such that $M\perp \langle t\rangle \ra L$.
	Let $\ell$ and $\ell(t;\alpha,\beta)$ be binary $\z$-lattices given in Section \ref{section2}.
	If $\ell(t;\alpha,\beta)$ is positive definite, then $\ell(t;\alpha,\beta)_q\ra M_q$ for any prime $q$ with $\gcd(q,2dM)=1$.	
	In addition, if $M$ is of class number $1$ and $\ell(t;\alpha,\beta)_q\ra M_q$ for any prime $q$ dividing $2dM$, then $\ell$ is represented by $L$. 
\end{prop}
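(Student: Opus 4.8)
The plan is to derive the second assertion from the first by a local--global argument, and to prove the first (local) assertion directly over each $\z_q$ with $\gcd(q,2dM)=1$.

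\emph{From local to global.} Assume the hypotheses of the second assertion. By the first assertion $\ell(t;\alpha,\beta)_q\ra M_q$ for every $q$ with $\gcd(q,2dM)=1$, and by hypothesis the same holds for every $q\mid 2dM$; hence $\ell(t;\alpha,\beta)$ is locally represented by $M$, i.e.\ $\ell(t;\alpha,\beta)\ra\gen(M)$. Since $h(M)=1$, the genus of $M$ is the single isometry class of $M$, so $\ell(t;\alpha,\beta)\ra M$. The remark at the end of Section~\ref{section2} then yields $\ell\ra M\perp\langle t\rangle$, and since $M\perp\langle t\rangle\ra L$ by hypothesis, $\ell\ra L$.

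\emph{The local assertion.} Fix $q$ with $\gcd(q,2dM)=1$; then $q$ is odd and $M_q$ is unimodular of rank $4$. The crux is the claim that \emph{$q$ does not divide all three of $a-t\alpha^{2},\ b-t\alpha\beta,\ c-t\beta^{2}$}. A direct expansion gives
\[
d\ell(t;\alpha,\beta)=(a-t\alpha^{2})(c-t\beta^{2})-(b-t\alpha\beta)^{2}=p^{2}-t\,(c\alpha^{2}-2b\alpha\beta+a\beta^{2}).
\]
Here $c\alpha^{2}-2b\alpha\beta+a\beta^{2}$ is a positive definite integral binary form (its determinant is $ac-b^{2}=d\ell=p^{2}>0$ and $c>0$), so its value at $(\alpha,\beta)\neq(0,0)$ is a positive integer; together with $t\ge1$ and the positive definiteness of $\ell(t;\alpha,\beta)$, this forces $0<d\ell(t;\alpha,\beta)<p^{2}$, so $p^{2}\nmid d\ell(t;\alpha,\beta)$. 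Now if $q$ divided all three entries, the Gram matrix of $\ell$ would be congruent to $\bigl(\begin{smallmatrix}t\alpha^{2}&t\alpha\beta\\ t\alpha\beta&t\beta^{2}\end{smallmatrix}\bigr)\pmod q$, forcing $p^{2}=d\ell\equiv0\pmod q$, hence $q=p$ and then $p^{2}\mid d\ell(t;\alpha,\beta)$ --- a contradiction. This proves the crux, so $\ell(t;\alpha,\beta)_q\simeq\langle\varepsilon_{1}\rangle\perp\langle q^{k}\varepsilon_{2}\rangle$ with $\varepsilon_{1},\varepsilon_{2}\in\z_q^{\times}$ and $k\ge0$. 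Since $M_q$ is unimodular of rank $\ge2$ and $q$ is odd, it represents the unit $\varepsilon_{1}$; a vector $\bw_{1}$ of norm $\varepsilon_{1}$ is primitive, so $M_q=\z_q\bw_{1}\perp W$ with $W$ unimodular of rank $3$. Because $dW$ is a unit and $q$ is odd, $W\otimes\q_q$ is isotropic, whence $W\simeq\mathbb H\perp\langle\varepsilon_{3}\rangle$ over $\z_q$; as $\mathbb H$ represents every element of $\z_q$, $W$ represents $q^{k}\varepsilon_{2}$, say by $\bw_{2}$. Then $\z_q\bw_{1}+\z_q\bw_{2}\simeq\langle\varepsilon_{1},q^{k}\varepsilon_{2}\rangle\simeq\ell(t;\alpha,\beta)_q$, so $\ell(t;\alpha,\beta)_q\ra M_q$.

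\emph{Main obstacle.} The second half of the local step is routine local theory of $\z_q$-lattices at odd $q$ (diagonalization, splitting off unimodular summands, universality of $\mathbb H$; see \cite{ki},\cite{om2}). The delicate point is the crux, and it cannot be omitted: a unimodular $\z_q$-lattice of rank $4$ of nonsquare determinant need not represent an arbitrary binary $\z_q$-lattice --- for instance $\langle1,1,1,\varepsilon\rangle$ with $\varepsilon$ a nonsquare unit fails to represent $\langle q,q\rangle$ when $q\equiv3\pmod4$ --- so one genuinely needs $\ell(t;\alpha,\beta)_q$ to carry a unimodular Jordan component, and that is exactly what the positive definiteness hypothesis on $\ell(t;\alpha,\beta)$ secures through the bound $d\ell(t;\alpha,\beta)<p^{2}$.
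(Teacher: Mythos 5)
Your proof is correct and follows the same overall strategy as the paper: reduce the global claim to the local one via $h(M)=1$ together with the remark that $\ell(t;\alpha,\beta)\ra M$ implies $\ell\ra M\perp\langle t\rangle\ra L$, and prove the local claim by showing that $q$ cannot divide all three entries of $\ell(t;\alpha,\beta)$, i.e.\ that $s(\ell(t;\alpha,\beta)_q)=\z_q$, after which representation by the unimodular quaternary $M_q$ is routine (the paper leaves this last step implicit, you spell it out). The one place you genuinely diverge is how the contradiction is reached: the paper writes $a-t\alpha^2=pm$, $c-t\beta^2=pn$, uses the reduction estimates $a\le\frac{2}{\sqrt3}p$ and $c\le\frac43p$ to force $m=n=1$, then deduces $b=t\alpha\beta$ and obtains $p^2>p^2$; you instead note $d\ell(t;\alpha,\beta)=p^2-t(c\alpha^2-2b\alpha\beta+a\beta^2)$, where the subtracted term is a positive integer because the adjugate form of $\ell$ is positive definite and $(\alpha,\beta)\neq(0,0)$, so $0<d\ell(t;\alpha,\beta)<p^2$, which is incompatible with $p$ dividing all three entries (that would force $p^2\mid d\ell(t;\alpha,\beta)$). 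Your version is shorter and bypasses the size estimates entirely; the extra facts the paper extracts along the way ($m=n=1$, $b=t\alpha\beta$) are not used elsewhere, so nothing is lost.
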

\begin{proof}
	The last part of the theorem will follow directly once we prove that $\ell(t;\alpha,\beta)_q\ra M_q$ for any prime $q$ with $\gcd(q,2dM)=1$ under the condition that $\ell(t;\alpha,\beta)$ is positive definite.
	 
	Let $q$ be a prime such that $\gcd(q,2dM)=1$. To show that $\ell(t;\alpha,\beta)_q \ra\! M_q$, it suffices to show that $s(\ell(t;\alpha,\beta)_q)=\z_q$, since $M_q$ is unimodular. Assume to the contrary that $s(\ell(t;\alpha,\beta)_q)\subseteq q\z_q$.
	Then
	$$
	a-t\alpha^2\equiv b-t\alpha \beta \equiv c-t\beta^2\equiv 0\Mod q.
	$$
	It follows that $ac-b^2$ is divisible by $q$ and thus $q=p$.
	Put
	$$
	a-t\alpha^2=pm\ \ \text{and}\ \ c-t\beta^2=pn.
	$$
	Noting that
	$$
	\frac34a^2\le \frac34 ac\le ac-b^2=p^2,
	$$
	we have $a\le \displaystyle\frac{2}{\sqrt{3}}p$.
	It follows from
	$$
	0\le t\alpha^2=a-pm\le \left(\frac{2}{\sqrt{3}}-m\right)p
	$$
	that $m=1$.
	Since
	$$
	\frac34 pc\le \frac34 ac\le ac-b^2=p^2,
	$$
	we have $c\le \displaystyle\frac{4}{3}p$.
	So we have $n=1$.
	
	On the other hand, since $b-t\alpha \beta \equiv 0\Mod p$ and
	$$
	d\left(\ell(t;\alpha,\beta)\right)=(a-t\alpha^2)(c-t\beta^2)-(b-t\alpha \beta)^2=p^2-(b-t\alpha \beta)^2>0,
	$$
	it follows that $b=t\alpha \beta$.
	This yields that
	$$
	p^2=ac-b^2=(p+t\alpha^2)(p+t\beta^2)-t^2\alpha^2\beta^2=p^2+p(t\alpha^2+t\beta^2)>p^2,
	$$
	which is absurd. This completes the proof.
\end{proof}

\begin{lem}\label{lem:ell-pos-def}
Under the same notations given above, if either 
\begin{enumerate}
\item $a>\frac{4t}{3}(\alpha^2+\beta^2)$, or
\item $a>(1+u)t\alpha^2$ and $p\ge \frac{4(1+u)}{3u-1}t\beta^2$ for some real number $u>\frac13$,
\end{enumerate}
then $\ell(t;\alpha,\beta)$ is positive definite {\rm (}hence, so is $\widetilde{\ell}(t;\alpha,\beta)${\rm )}.
\end{lem}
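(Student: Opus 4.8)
The plan is to reduce the claim to two scalar inequalities and verify each from the hypotheses. Write $A=a-t\alpha^2$, $B=b-t\alpha\beta$, $C=c-t\beta^2$ for the entries of $\ell(t;\alpha,\beta)$. A direct expansion gives $AC-B^2=(ac-b^2)-t(c\alpha^2-2b\alpha\beta+a\beta^2)=p^2-tQ$, where $Q:=c\alpha^2-2b\alpha\beta+a\beta^2$; so $\ell(t;\alpha,\beta)$ is positive definite as soon as $A>0$ and $tQ<p^2$. Since $\widetilde{\ell}(t;\alpha,\beta)$ has the same $(1,1)$-entry $A$ and determinant $\tfrac14(AC-B^2)$, the parenthetical assertion follows at once. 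Before splitting into cases I would record the consequences of Minkowski reduction: from $0\le 2b\le a\le c$ we get $b^2\le \tfrac14 a^2\le \tfrac14 ac$, hence $p^2=ac-b^2\ge \tfrac34 ac$, which gives both $a\le \tfrac{2}{\sqrt3}\,p$ (using $a\le c$) and $c\le \tfrac{4p^2}{3a}$; and since $\alpha,\beta,b\ge 0$ the cross term is nonpositive, so $Q\le c\alpha^2+a\beta^2$.

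In case (1) the inequality $a>\tfrac{4t}{3}(\alpha^2+\beta^2)$ gives $a>\tfrac43 t\alpha^2\ge t\alpha^2$, so $A>0$ (and $A=a>0$ trivially if $\alpha=0$). For the determinant I would chain $tQ\le t(c\alpha^2+a\beta^2)\le tc(\alpha^2+\beta^2)\le \tfrac{4tp^2}{3a}(\alpha^2+\beta^2)$, using $a\le c$ and then $c\le \tfrac{4p^2}{3a}$; the hypothesis on $a$ makes the last expression strictly less than $p^2$, so $tQ<p^2$.

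In case (2) the inequality $a>(1+u)t\alpha^2\ge t\alpha^2$ again gives $A>0$. Here I would estimate the two halves of $tQ\le tc\alpha^2+ta\beta^2$ separately: $c\le \tfrac{4p^2}{3a}$ combined with $a>(1+u)t\alpha^2$ yields $tc\alpha^2<\tfrac{4p^2}{3(1+u)}$ (this is $0$ when $\alpha=0$), while $a\le \tfrac{2}{\sqrt3}\,p$ together with $t\beta^2\le \tfrac{(3u-1)p}{4(1+u)}$ (a rewriting of the hypothesis on $p$) yields $ta\beta^2\le \tfrac{(3u-1)p^2}{2\sqrt3(1+u)}$. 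Adding, $tQ<\tfrac{p^2}{1+u}\bigl(\tfrac43+\tfrac{3u-1}{2\sqrt3}\bigr)$, so it suffices to check $\tfrac43+\tfrac{3u-1}{2\sqrt3}\le 1+u$; subtracting $\tfrac43$ and dividing by $3u-1>0$, this reduces to the numerical inequality $\tfrac{1}{2\sqrt3}\le \tfrac13$, i.e. $9\le 12$. Hence $tQ<p^2$ and $\ell(t;\alpha,\beta)$ is positive definite.

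I expect no real difficulty beyond careful bookkeeping; in particular one must track the degenerate subcases $\alpha=0$ and $\beta=0$, where one term of $Q$ simply vanishes and the estimates become trivial. The one load-bearing point is the balancing in case (2): the constant $\tfrac{4(1+u)}{3u-1}$ in the hypothesis is precisely what makes the two estimates sum below $p^2$, and the final comparison closes only because of the small but strictly positive slack $\tfrac13-\tfrac{1}{2\sqrt3}$, so the constants should be handled exactly rather than estimated loosely.
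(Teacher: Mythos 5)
Your proof is correct and follows essentially the same route as the paper: both expand $d(\ell(t;\alpha,\beta))=p^2-t(c\alpha^2-2b\alpha\beta+a\beta^2)$, discard the nonpositive cross term, and invoke the Minkowski-reduction bounds $b^2\le\frac14 ac$, $a\le c$ (so $p^2\ge\frac34 ac$). The only difference is cosmetic bookkeeping in case (2), where you convert everything into fractions of $p^2$ via $a\le\frac{2}{\sqrt3}p$ and $c\le\frac{4p^2}{3a}$ while the paper compares $t\beta^2$ directly against $\frac{3u-1}{4(1+u)}c$ using $c\ge p$; your explicit check that the $(1,1)$-entry is positive is a welcome extra.
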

\begin{proof}
If $a>\displaystyle\frac{4t}{3}(\alpha^2+\beta^2)$, then
\begin{align*}
d(\ell(t;\alpha,\beta))&=ac-b^2-(t\alpha^2c+t\beta^2a-2t\alpha \beta b)\\
&\ge\frac34 ac-t\alpha^2c-t\beta^2c>0.
\end{align*}
Now, assume that the second condition holds.
Since
$$
a-t\alpha^2>\displaystyle\frac{u}{1+u}a\ \ \text{and}\ \ \frac{4(1+u)}{3u-1}t\beta^2\le p\le c,
$$
we have
\begin{multline*}
d\left(\ell(t;\alpha,\beta)\right)=ac-b^2-(t\alpha^2c+t\beta^2a-2t\alpha \beta b)\\
\ge (a-t\alpha^2)c-b^2-t\beta^2a \ge (a-t\alpha^2)c-\frac{a^2}{4}-t\beta^2a\\
>a\left(\frac{u}{1+u}c-\frac{a}{4}-t\beta^2\right)>\left(\frac{u}{1+u}-\frac14\right)c-t\beta^2\ge0.
\end{multline*}
This completes the proof.
\end{proof}

\begin{rmk}\label{rmk:alpha=0}
If $\alpha=0$ and $p>\frac{4}{3}t\beta^2$, then one may show that $\ell(t;\alpha,\beta)$ is positive definite by taking $u$ sufficiently large in (2) given in Lemma \ref{lem:ell-pos-def}.
\end{rmk}

\begin{thm}\label{thm:basiccase}  
Any quinary $\z$-lattice $L$ in Table \ref{tablew} is an isolation of $I_2$. 
\end{thm}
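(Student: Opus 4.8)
The plan is to handle each of the six lattices $L$ listed in Table \ref{tablew} individually, following the reduction set up in Section \ref{section2}. Since each such $L$ is one of the $231$ candidates, we already know that $I_2\nra L$ and that $L$ represents every proper sublattice of $I_2$ of index at most $29$. It therefore remains only to show that every sublattice $\ell=\begin{pmatrix}a&b\\ b&c\end{pmatrix}$ of $I_2$ of odd prime index $p\ge 31$ is represented by $L$, where we may assume $\ell$ is Minkowski reduced, $d\ell=p^2$, the entries satisfy the congruences in \eqref{mod2}, and neither $a$ nor $c$ has a prime divisor $q\ne p$ with $q\equiv 3\Mod 4$. Note that $d\ell=p^2$ together with Minkowski reduction already forces $a\le\frac{2}{\sqrt{3}}\,p$.

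For each $L$ I would fix a quaternary sublattice $M$ of class number one together with a set of positive integers $t$ for which $M\perp\langle t\rangle\ra L$; when $L\simeq\langle 1\rangle\perp M$ one may take $t=1$, and further admissible values of $t$ arise from other orthogonal decompositions of sublattices of $L$. Given $\ell$, it then suffices by Proposition \ref{key} to produce an admissible $t$ and non-negative integers $\alpha,\beta$ with $\alpha^2+\beta^2\ne 0$ such that (i) $\ell(t;\alpha,\beta)$ is positive definite and (ii) $\ell(t;\alpha,\beta)_q\ra M_q$ for every prime $q$ dividing $2dM$; indeed, Proposition \ref{key} then supplies representability of $\ell(t;\alpha,\beta)$ over $\z_q$ for all remaining primes $q$ automatically, so $h(M)=1$ gives $\ell(t;\alpha,\beta)\ra M$ and hence $\ell\ra M\perp\langle t\rangle\ra L$. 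Condition (i) is arranged by Lemma \ref{lem:ell-pos-def} and Remark \ref{rmk:alpha=0}: when $a$ is of order $p$ one chooses small $\alpha,\beta$ (and an admissible $t$) so that $a>\frac{4t}{3}(\alpha^2+\beta^2)$, while when $a$ is small one takes $\alpha=0$ and appeals to Remark \ref{rmk:alpha=0}, which applies as soon as $p>\frac{4}{3}t\beta^2$. Condition (ii) is a finite verification: for each $L$ the integer $2dM$ has only a few prime divisors, and for each such $q$ the question of whether $\ell(t;\alpha,\beta)_q\ra M_q$ is decided by the Jordan splitting of $M_q$ together with the residues of $a,b,c$ modulo a small power of $q$. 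It is precisely here that the congruences in \eqref{mod2} and the restriction on prime divisors of $a$ and $c$ get used, to exclude the finitely many obstructive Jordan types.

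Concretely, I would split the argument according to the residue of $a$ modulo a small modulus (dictated by \eqref{mod2} and by the primes dividing $2dM$), according to $p$ modulo the odd part of $2dM$, and according to the size of $p$ --- the finitely many small primes $p$ being already covered by the candidate property. In each resulting case I would name one explicit triple $(t,\alpha,\beta)$, check admissibility of $t$, invoke Lemma \ref{lem:ell-pos-def} or Remark \ref{rmk:alpha=0} for positive-definiteness, and verify the finitely many local conditions in (ii) by inspecting $2$-adic and $q$-adic invariants. Carrying this out for each of the six lattices proves the theorem.

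The step I expect to be the main obstacle is ensuring that the case list is genuinely exhaustive, i.e.\ that for every admissible quadruple $(a,b,c,p)$ at least one available triple $(t,\alpha,\beta)$ simultaneously satisfies (i) and (ii). These two requirements pull against each other: positive-definiteness forces $\alpha,\beta$ to be small --- very restrictive when $a$ itself is small --- whereas the local conditions at the bad primes constrain $\alpha,\beta$ modulo those primes, so one must adjust the residues without making $\alpha,\beta$ too large. The corners where $a$ is small, or where $p$ is not large enough for Remark \ref{rmk:alpha=0} to apply, need separate attention, the latter falling back on the index-$\le 29$ candidate property. Presumably these six lattices are singled out as the ``basic cases'' precisely because for each of them a short fixed list of triples $(t,\alpha,\beta)$ already covers all admissible $\ell$, in contrast to the remaining daggered lattices treated in later sections.
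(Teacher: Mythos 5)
Your plan follows the paper's proof essentially step for step: for each $L$ the paper fixes one quaternary sublattice $M$ of class number one, one $t$ with $M\perp\langle t\rangle \ra L$, and one pair $(\alpha,\beta)=(0,\beta)$; positive definiteness of $\ell(t;0,\beta)$ comes from Lemma \ref{lem:ell-pos-def} and Remark \ref{rmk:alpha=0}, and the local conditions at the primes dividing $2dM$ are verified directly from the congruences \eqref{mod2} (the paper writes out only the sixth row of Table \ref{tablew} and declares the rest similar). The one place your plan as written would break down is the treatment of small $p$: you assert that the primes not reachable by Remark \ref{rmk:alpha=0} are ``already covered by the index-$\le 29$ candidate property,'' but that property only covers $p\le 29$, whereas Remark \ref{rmk:alpha=0} with the paper's choices $t=77$ and $t=63$ (first and fourth rows) requires $p>\frac{4}{3}t\beta^2$, i.e.\ $p\ge 103$ and $p\ge 89$ respectively. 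So the primes $31\le p\le 101$ (resp.\ $31\le p\le 83$) are covered by neither mechanism in your write-up; the paper closes this window by a separate direct (computer) verification that $\ell\ra L$ for all $p<p_0$, with $p_0$ as large as $103$. With that additional finite computation added, your argument coincides with the paper's.
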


\begin{proof}  Let $L$ be any quinary $\z$-lattice given in Table \ref{tablew}, and let $M$, $t$, $(\alpha,\beta)$, and $p_0$ be given in the same line with $L$ in Table \ref{tablew}. 

Let $\ell=\begin{pmatrix} a&b\\b&c\end{pmatrix}$ be a Minkowski reduced binary $\z$-sublattice of $I_2$. Recall that $\ell$ satisfies \eqref{mod2}. If $p< p_0$, then one may directly check (with the help of computer programming) that $\ell \ra L$. Now we assume that $p\ge p_0$.

Note that $M \perp \langle t\rangle \ra L$. Note further that $\ell(t;0,\beta)$ is positive definite by Lemma \ref{lem:ell-pos-def} and Remark \ref{rmk:alpha=0}.
Therefore, by Proposition \ref{key}, $\ell \ra L$ if 
$$
\ell(t;0,\beta)_q=\begin{pmatrix} a&b\\b&c-t\beta^2\end{pmatrix} \ra M_q
$$ 
for any prime $q$ contained in the set given in the third column of the same line with $L$ in Table \ref{tablew}.   

Since all the other cases can be done in similar manners, we only consider the sixth case in Table \ref{tablew}. So, we may assume that $q \in \{2,3,5\}$. For $q=3, 5$, since $d(\ell(15;0,1))=p^2-15a \not\equiv 0 \Mod{q}$, the $\z_q$-lattice $\ell(15;0,1)_q$ is unimodular. Hence we have $\ell(15;0,1)_q\ra M_q$.
Assume that $q=2$. Note that
$$
d(\ell(15;0,1)_2)=p^2-15a\equiv \!\!\begin{cases}
	3 \Mod{8} &\!\!\!\!\text{if } a\equiv 2 \!\Mod{8},\\
	2 \Mod{4} &\!\!\!\!\text{if } a\equiv 1 \!\Mod{4}.
	                 \end{cases}
$$
Hence one may easily check that
$$
\ell(15;0,1)_2\simeq \begin{cases}
	\mathbb{A} &\text{if } a\equiv 2 \Mod{8}, \\
	\langle a,a\cdot d(\ell(15;0,1)_2) \rangle &\text{if } a\equiv 1 \Mod{4}.
	\end{cases}
$$
In each case, we have $\ell(15;0,1)_2\ra M_2\simeq \langle 3,3,3,5\rangle$. Therefore we have $\ell(15;0,1)\ra M$ and $\ell \ra M\perp \langle 15\rangle \ra L$. This completes the proof.
\end{proof}

%%%%%%%%%%%%%%%%%%%%%%%%%%%%%%%%%%%%%%%%%%%%%%%%%%%%%%%%%%%%%%%%%%%%%%%%%%%%%%%%%%%%

\begin{normalsize}
	\begin{table}[ht]

	\renewcommand{\arraystretch}{1}
		\caption{Data for the proof of Theorem \ref{thm:basiccase}} \label{tablew}

		\begin{tabular}{c|c|c|c|c|c}
			\hline
			$L$ & $M$ & $\{q : q\mid 2dM\}$ & $t$ & $(\alpha,\beta)$  &  $p_0$  \\
			\hline \hline
			$\langle 1\rangle \perp \begin{pmatrix} 2&0&0&1\\ 0&2&1&1\\ 0&1&2&0\\ 1&1&0&3\end{pmatrix}$ & $\langle 1 \rangle \perp \begin{pmatrix} 2&1&1\\ 1&2&0\\ 1&0&3\end{pmatrix}$ & $\{2,7\}$ &  $77$ &  $(0,1)$ & $103$  \\
			\hline	
			$\langle 1,2,3\rangle \perp \begin{pmatrix} 2&1\\1&2\end{pmatrix}$ &$\langle 1,3\rangle \perp \begin{pmatrix} 2&1\\1&2\end{pmatrix}$ & $\{2,3\}$ &  $2$ &  $(0,2)$ & $11$ \\
			\hline		
			$\langle 1,2\rangle \perp \begin{pmatrix} 2&0&1\\0&2&1\\ 1&1&3\end{pmatrix}$ &$\langle 1\rangle \perp \begin{pmatrix} 2&0&1\\0&2&1\\ 1&1&3\end{pmatrix}$ & $\{2\}$ &  $2$ &  $(0,1)$ & $3$ \\
			\hline	
			$\langle 1\rangle \perp \begin{pmatrix} 2&0&1&1\\ 0&2&1&0\\ 1&1&2&0\\ 1&0&0&3\end{pmatrix}$ &$\langle 1\rangle \perp \begin{pmatrix} 2&1&1\\1&2&0\\1&0&3\end{pmatrix}$ & $\{2,7\}$ &  $63$ &  $(0,1)$ & $89$  \\
			\hline	
			$\langle 1,2\rangle \perp \begin{pmatrix} 2&1&1\\1&3&0\\ 1&0&3\end{pmatrix}$ & $\langle 1\rangle \perp \begin{pmatrix} 2&1&1\\1&3&0\\ 1&0&3\end{pmatrix}$& $\{2,3\}$ &  $2$ &  $(0,3)$ & $29$ \\
			\hline	
			$\langle 1\rangle \perp \begin{pmatrix} 2&1\\1&3\end{pmatrix} \perp \begin{pmatrix} 2&1\\1&3\end{pmatrix}$ & $\langle 1,3\rangle \perp \begin{pmatrix} 2&1\\1&3\end{pmatrix}$& $\{2,3,5\}$ &  $15$ &  $(0,1)$ & $23$  \\
			\hline	
			\end{tabular}
		
	\end{table}
\end{normalsize}

%%%%%%%%%%%%%%%%%%%%%%%%%%%%%%%%%%%%%%%%%%%%%%%%%%%%%%%%%%%%%%%%%%%%%%%%%%%%%%%%%%%%%%%%%%%%%%%%%%%%%%%%%%%%%%%%%%%%%%%%%%%%%%%%%%%%%%%%%%%%%%%%%%%%%%%%%%%%%%%%%%%%%%%%%%%%%%%%%%%%%%%%%%%%%%%%%%%%%%%%%%%%%%%%%%%%%%%%%%%%%%%%%%%%%%%%%%%%%%%%%%%%%%%%%%%%%%%%%%%%%%%%%%%%%%%%%%%%%%%%%%%%%%%%%%%%%%%%%%%%%%%%%%%%%%%%%%%%%%%%%%%%%%%%%%%%%%%%%%%%%%%%%%%%%%%%%%%%%%%%%%%
\section{Isolations of $I_2$ : General cases}
%%%%%%%%%%%%%%%%%%%%%%%%%%%%%%%%%%%%%%%%%%%%%%%%%%%%%%%%%%%%%%%%%%%%%%%%%%%%%%%%%%%%%%%%%%%%%%%%%%%%%%%%%%%%%%%%%%%%%%%%%%%%%%%%%%%%%%%%%%%%%%%%%%%%%%%%%%%%%%%%%%%%%%%%%%%%%%%%%%%%%%%%%%%%%%%%%%%%%%%%%%%%%%%%%%%%%%%%%%%%%%%%%%%%%%%%%%%%%%%%%%%%%%%%%%%%%%%%%%%%%%%%%%%%%%%%%%%%%%%%%%%%%%%%%%%%%%%%%%%%%%%%%%%%%%%%%%%%%%%%%%%%%%%%%%%%%%%%%%%%%%%%%%%%%%%%%%%%%%%%%%%

In this section, we prove that four quinary $\z$-lattices $L(i)$ with $i=1,2,3,4$ in Table \ref{tablegeneral} are isolations of $I_2$.

We define quaternary $\z$-lattices
$$
M(1)=M(2)=\langle 1,2,2,3\rangle,\  M(3)=\begin{pmatrix}2&1\\1&4\end{pmatrix} \perp \begin{pmatrix}2&1\\1&4\end{pmatrix}, \ M(4)=\langle 1\rangle \perp \begin{pmatrix}2&0&1\\0&2&1\\1&1&4\end{pmatrix},
$$
and
$$
N(1)=\langle 1,3\rangle \perp \begin{pmatrix}2&1\\1&4\end{pmatrix},\ N(2)=\langle 1,2\rangle \perp \begin{pmatrix}2&1\\1&3\end{pmatrix},\ N(3)=\langle 1,7\rangle \perp \begin{pmatrix}2&1\\1&4\end{pmatrix},
$$
and put $N(4)=M(4)$.
Note that
$$
d(M(1))=d(M(2))=2^23,\ \ d(M(3))=7^2,\ \ d(M(4))=2^23.
$$

%%%%%%%%%%%%%%%%%%%%%%%%%%%%%%%%%%%%%%%%%%%%%%%%%%%%%%%%%%%%%%%%%
%%%%%%%%%%%%%%%%%%%%%%%%%%%%%%%%%%%%%%%%%%%%%%%%%%%%%%%%%%%%%%%%%

\begin{thm} \label{thmgeneral}
The quinary $\z$-lattice $L(i)$ in Table \ref{tablegeneral} is an isolation of $I_2$ for any $i=1,2,3,4$.
\end{thm}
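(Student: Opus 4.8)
The plan is to run the scheme of Theorem~\ref{thm:basiccase}, the new feature being that the ``diagonal'' choice $(\alpha,\beta)=(0,\beta)$ no longer works uniformly: the admissible binary sublattices of $I_2$ must be split into finitely many families, each handled through \emph{one} of the two auxiliary quaternary lattices $M(i)$, $N(i)$ with a suitable triple $(t,\alpha,\beta)$ and a threshold $p_0$. Table~\ref{tablegeneral} is where these data are recorded.

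Fix $i$. By the reduction of Section~\ref{section2} it suffices to show $\ell\ra L(i)$ for every Minkowski reduced binary sublattice $\ell=\begin{pmatrix}a&b\\b&c\end{pmatrix}$ of $I_2$ of prime index $p$. The case $p=2$ is immediate, since both $\langle 2,2\rangle$ and $\langle 1,4\rangle$ are represented by every candidate in Table~\ref{table1}; and, with $p_0$ the threshold in play, the finitely many $\ell$ with $p<p_0$ (finite since $\tfrac34a^2\le ac-b^2=p^2$) are verified by direct computation. So assume $p\ge p_0$, and recall the conditions on $(a,b,c)$ from~\eqref{mod2}.

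Having first recorded that $M(i)$ and $N(i)$ have class number $1$ and that $M(i)\perp\langle t\rangle\ra L(i)$, $N(i)\perp\langle t\rangle\ra L(i)$ for the relevant $t$ (finite checks), one proceeds along one of two routes, chosen according to the residue of $a$ modulo $8$ and modulo the odd primes dividing $dM(i)$, $dN(i)$. In the first route one checks that $\ell(t;\alpha,\beta)$ is positive definite via Lemma~\ref{lem:ell-pos-def} (or Remark~\ref{rmk:alpha=0} when $\alpha=0$) using $p\ge p_0$, verifies the finitely many conditions $\ell(t;\alpha,\beta)_q\ra M(i)_q$ for $q\mid 2dM(i)$, and concludes $\ell(t;\alpha,\beta)\ra M(i)$ by Proposition~\ref{key} and $h(M(i))=1$, whence $\ell\ra M(i)\perp\langle t\rangle\ra L(i)$. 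In the second route one works with the halved form $\widetilde{\ell}(t;\alpha,\beta)$ of~\eqref{def:ell-tildeell}: by~\eqref{mod2} this is integral only when $t\equiv\alpha\equiv\beta\equiv1\pmod2$, which forces $\alpha\ne0$ in the residue classes of $a$ with $b$ odd --- and this is exactly why the purely diagonal recipe of Theorem~\ref{thm:basiccase} breaks down here. Proving $\widetilde{\ell}(t;\alpha,\beta)$ positive definite and $\widetilde{\ell}(t;\alpha,\beta)_q\ra N(i)_q$ for $q\mid 2dN(i)$, then using $h(N(i))=1$ together with the analogue of Proposition~\ref{key} for $\widetilde{\ell}$, gives $\widetilde{\ell}(t;\alpha,\beta)\ra N(i)$, hence $\ell(t;\alpha,\beta)\ra\widetilde{\ell}(t;\alpha,\beta)\ra N(i)$ and $\ell\ra N(i)\perp\langle t\rangle\ra L(i)$.

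The local checks are the heart of the matter. For $q$ coprime to $2dM(i)$ (resp. $2dN(i)$) there is nothing to do: $M(i)_q$ (resp. $N(i)_q$) is unimodular and, by the Minkowski-reduction estimate in the proof of Proposition~\ref{key}, $\ell(t;\alpha,\beta)_q$ (resp. $\widetilde{\ell}(t;\alpha,\beta)_q$) has unit scale there, so the embedding is automatic; thus only the primes of $\{2,3,5,7\}$ dividing $dM(i)$ or $dN(i)$ remain, and at each of these one reads off the Jordan splitting from~\eqref{mod2} and from $d\ell(t;\alpha,\beta)=p^2-(t\alpha^2c+t\beta^2a-2t\alpha\beta b)$ modulo a small power of $q$, then compares with the explicitly known $q$-adic lattice $M(i)_q$ or $N(i)_q$. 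I expect the real obstacle to be organisational: choosing the finite partition of the triples $(a,b,c)$ permitted by~\eqref{mod2} so that every piece admits a single $(t,\alpha,\beta)$ making both the positive-definiteness inequality of Lemma~\ref{lem:ell-pos-def} \emph{and} the $2$-adic (and, in some cases, $5$- or $7$-adic) embedding hold at once --- in particular covering the residue classes of $a$ mod $8$ where $\ell(t;0,\beta)_2$ will not sit inside $M(i)_2$ and the halved form must be routed through $N(i)$, and the large-$a$ ranges where the diagonal $\beta$ can no longer keep $\ell(t;0,\beta)$ positive definite and a skew sublattice must be subtracted off instead.
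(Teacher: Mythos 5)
Your first route is essentially the paper's main argument: for $p$ above a computational threshold ($241$ in the paper), one applies Proposition~\ref{key} to a class-number-one quaternary section $M(i)$ with $M(i)\perp\langle t\rangle\ra L(i)$, choosing $\ell(t;0,1)$ or $\ell(t;1,0)$ according to the parity of $a$, and (for $i=3,4$, where $M(i)_2$ is even) passing to the equivalent Gram matrix $\begin{pmatrix}a-2b+c&b-c\\b-c&c\end{pmatrix}$ when $a$ and $c$ are both odd before subtracting $t$ from the last diagonal entry. Up to the bookkeeping in Table~\ref{tablegeneral}, that part of your proposal matches the paper.

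The gap is in what you call the second route. The obstruction for small $a$ is not the integrality of the halved form $\widetilde{\ell}(t;\alpha,\beta)$, and it cannot be removed by switching to $N(i)$ and invoking class number one. Take $i=3$ and $a$ odd: since $M(3)_2\simeq\mathbb H\perp\mathbb H$ is even, any lattice fed into Proposition~\ref{key} must have even diagonal, which forces the basis change to first entry $a-2b+c$; but then $c\ge p^2/a$ gives $d\bigl(\ell'(35;0,1)\bigr)=p^2-35(a-2b+c)\le p^2(1-35/a)+35a<0$ whenever $a<35$, so positive definiteness fails for \emph{every} admissible choice. This is exactly why the exceptional pairs are $(3,a)$ with $a\in\{1,5,13,17,25,29\}$, and the same tension (local fit at $2$ or $3$ versus positive definiteness) produces the other exceptional pairs $(1,2),(1,10),(2,2),(2,10),(4,1),(4,5)$. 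The paper handles these with tools outside your framework: for $(1,2),(2,2)$ the classical fact that $\langle 1,2,3\rangle$ represents every positive integer not of the form $2^{2u+1}(8v+5)$, combined with the identity relating $\begin{pmatrix}2&1\\1&9(i+3)-4\end{pmatrix}$ to $\begin{pmatrix}2&1\\1&i+3\end{pmatrix}$; for $(4,1)$ the result of \cite{jko} that $\langle 2,2,5\rangle$ represents all squares except $1$; and for the remaining eight pairs the congruence-representation theorem \cite[Corollary~2.5]{jk}, which represents $\begin{pmatrix}A&B\\B&C\end{pmatrix}$ by $N(i)$ whenever $A=a-t\alpha^2$ is fixed and $(B,C)$ lies in a prescribed set $\mathfrak S_s+s\z^2$, with $\beta$ chosen (e.g.\ via $p^2+ak^2\equiv0\bmod 9$) to land in that set. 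The halved form $\widetilde{\ell}$ and the quotient argument you allude to belong to a different theorem of the paper (Proposition~\ref{key2} and Lemma~\ref{2-core}), where they are needed precisely because the relevant $M$ there does \emph{not} have class number one. Without some substitute for the \cite{jk} input, your proof does not cover the exceptional cases.
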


\begin{proof}
Let $\ell=\begin{pmatrix} a&b\\b&c\end{pmatrix}$ be a Minkowski reduced binary $\z$-sublattice of $I_2$ with index $p$ so that it satisfies \eqref{mod2}.
If $p\le 241$, one may directly check that $L(i)$ represents $\ell$ for any $i=1,2,3,4$.
From now on, we assume that $p>241$.

We first consider the case when $i=1$.
Note that $M(1)\perp \langle 14\rangle \ra L(1)$.
First, assume that $a\equiv 1\Mod 2$.
If we define $\well=\ell(14;0,1)$, then one may easily show that $\well_q\ra M(1)_q$ for $q=2,3$.
Since $\well$ is positive definite by Lemma \ref{lem:ell-pos-def} and Remark \ref{rmk:alpha=0}, $\ell \ra L(1)$ by Proposition \ref{key}. 
Next assume that $a\equiv 0\Mod 2$ and $a\ge 26$.
If we define $\well=\ell(14;1,0)$, then one may show that $\well_q\ra M(1)_q$ for $q=2,3$.
Furthermore since $\well$ is positive definite by Lemma \ref{lem:ell-pos-def},   
$\ell \ra L(1)$ by Proposition \ref{key}.
The case when $a$ is an even integer less than or equal to 24 will be considered later.
For $i=2,3,4$, one may deduce that $\ell \ra L(i)$ by the similar argument using Proposition \ref{key}.
Some data needed to prove the remaining cases are provided in Table \ref{tablegeneral}.
Note that
$$
\ell =\begin{pmatrix}a&b\\b&c\end{pmatrix} \simeq \begin{pmatrix}a-2b+c&b-c\\b-c&c\end{pmatrix}.
$$
Hence if $\well=\begin{pmatrix}a-2b+c&b-c\\b-c&c-35\end{pmatrix}\ra M(3)$, then $\ell \ra L(3)$.
Since we are assuming that $p\ge 241$, $\well$ is positive definite.

\begin{normalsize}
	
	\begin{table}[ht]
		
		\renewcommand{\arraystretch}{1}
		
		\caption{Data for the proof of Theorem \ref{thmgeneral}}
		
		\label{tablegeneral}
		
		\begin{tabular}{c|c|c|c|c|c}
			
			\hline
			
			$i$ & $L(i)$ & $\{q\mid 2dM(i)\}$ & $t$ & $a$ & $\well$\\
			
			\hline
			
			\hline
			
			\multirow{2}{*}[-0.5em]{1} & \multirow{2}{*}[-0.4em]{$\langle 1,2,3\rangle \perp \begin{pmatrix}2&1\\1&4\end{pmatrix}$} & \multirow{2}{*}[-0.5em]{$\{2,3\}$} & \multirow{2}{*}[-0.5em]{$14$} & $a\equiv 1\Mod 2$ & $\ell(14;0,1)$\\
			
			\cline{5-6}
			
			& & & & $\begin{array}{c}a\equiv 0\Mod 2\\ a\ge 26\end{array}$ & $\ell(14;1,0)$ \\
			
			\hline
			
			\multirow{2}{*}[-0.5em]{2} & \multirow{2}{*}[-0.5em]{$\langle 1,2,3\rangle \perp \begin{pmatrix}2&1\\1&5\end{pmatrix}$} & \multirow{2}{*}[-0.5em]{$\{2,3\}$} & \multirow{2}{*}[-0.5em]{$18$} & $a\equiv 1\Mod 2$ & $\ell(18;0,1)$\\
			
			\cline{5-6}
			
			& & & & $\begin{array}{c}a\equiv 0\Mod 2\\ a\ge 26\end{array}$ & $\ell(18;1,0)$\\
			
			\hline
			
			\multirow{3}{*}[-1em]{3} & \multirow{3}{*}[-1em]{$\langle 1\rangle \perp \begin{pmatrix}2&1\\1&3\end{pmatrix} \perp \begin{pmatrix}2&1\\1&4\end{pmatrix}$}& \multirow{3}{*}[-1em]{$\{2,7\}$} & \multirow{3}{*}[-1em]{$35$} & $a\equiv 0\Mod 2$ & $\ell(35;0,1)$\\
			
			\cline{5-6}
			
			& & & & $\begin{array}{c}c\equiv 0\Mod 2\\ a\ge 37\end{array}$ & $\ell(35;1,0)$ \\
			
			\cline{5-6}
			
			& & & & $\begin{array}{c}ac\equiv 1\Mod 2\\ a\ge 37\end{array}$ & $\begin{pmatrix}a-2b+c&b-c\\b-c&c-35\end{pmatrix}$ \\
			
			\hline
			
			\multirow{3}{*}[-1em]{4} & \multirow{3}{*}[-1em]{$\langle 1,5\rangle \perp \begin{pmatrix}2&0&1\\0&2&1\\1&1&4\end{pmatrix}$} & \multirow{3}{*}[-1em]{$\{2,3\}$} & \multirow{3}{*}[-1em]{$5$} & $a\equiv 0\Mod 2$ & $\ell(5;0,1)$\\
			
			\cline{5-6}
			
			& & & & $\begin{array}{c}c\equiv 0\Mod 2\\ a\ge 7\end{array}$ & $\ell(5;1,0)$\\
			
			\cline{5-6}
			
			& & & & $\begin{array}{c}ac\equiv 1\Mod 2\\ a\ge 13\end{array}$ & $\begin{pmatrix}a-2b+c&b-c\\b-c&c-5\end{pmatrix}$ \\
			
			\hline
			
		\end{tabular}
		
	\end{table}
	
\end{normalsize}

Now, we consider the exceptional cases, which are
$$
(i,a)\in \left\{ \begin{array}{c} (1,2),(1,10),(2,2),(2,10),(3,1),(3,5),\\ (3,13),(3,17),(3,25),(3,29),(4,1),(4,5)\end{array} \right\}.
$$
Assume first that $(i,a)\in \{(1,2),(2,2)\}$.
Define positive integers $n_1$ and $n_2$ by
$$
n_1=\displaystyle\frac{p^2+1}{2}-(i+3)\ \ \text{and}\ \ n_2=\displaystyle\frac{p^2+1}{2}-(9(i+3)-4).
$$
Then either $n_1$ or $n_2$ is not of the form $2^{2u+1}(8v+5)$ for any nonnegative integers $u$ and $v$.
Hence either $n_1$ or $n_2$ is represented by $\langle 1,2,3\rangle$.
Since
$$
\begin{pmatrix}2&1\\1&9(i+3)-4\end{pmatrix}=\begin{pmatrix}1&0\\-1&3\end{pmatrix}\begin{pmatrix}2&1\\1&i+3\end{pmatrix}\begin{pmatrix}1&-1\\0&3\end{pmatrix},
$$
it follows that
$$
\ell=\begin{pmatrix}2&1\\1&\frac{p^2+1}{2}\end{pmatrix}\ra \langle 1,2,3\rangle \perp \begin{pmatrix}2&1\\1&i+3\end{pmatrix}=L(i).
$$
Assume that $(i,a)=(4,1)$.
Since $\langle 2,2,5\rangle$ represents all squares of integers except for 1 (see \cite{jko}), we have
$$
\ell=\langle 1,p^2\rangle \ra \langle 1,2,2,5\rangle \ra L(4).
$$
Assume that $(i,a)=(2,10)$.
In this case, one may easily show by using Proposition \ref{key} that
$$ 
\begin{pmatrix}10&b\\b&c-2\cdot 2^2\end{pmatrix}\ra M=\langle 1,3 \rangle \perp \begin{pmatrix} 2 & 1 \\ 1&5 \end{pmatrix}.
$$
Hence $\ell \ra M\perp \langle 2\rangle=L(2)$.

Now, we consider the remaining cases, which are
$$
(i,a)\in \{(1,10),(3,1),(3,5),(3,13),(3,17),(3,25),(3,29),(4,5)\}.
$$
For any $(i,a)\in \{(3,5),(3,17),(3,29)\}$, we define an integer \begin{equation}\label{eqn:defnk}
k=k(i,a,p)\in \{1,2,4\} \text{ such that } p^2+ak^2\equiv 0\Mod 9.
\end{equation}
Assume that $(i,a)=(3,5)$.
In this case, we put
$$
t=35,\ \alpha=0,\ \beta=k,\ N=\langle 1\rangle \perp \begin{pmatrix}2&1\\1&4\end{pmatrix}\perp \langle 7\rangle, \text{ and } \well=\ell(t;\alpha,\beta).
$$
Since $N\perp \langle t\rangle \ra L(i)$, it suffices to show that $\well \ra N$.
Using \cite[Corollary 2.5]{jk} with $A=a-t\alpha^2$, $s=9$, one sees that any (positive definite) binary $\z$-lattice $\begin{pmatrix}A&B \\ B&C \end{pmatrix}$ with $(B,C)\in \mathfrak{S}_s+s\z^2$ is represented by $N$, where
$$
\mathfrak{S}_s=\{(0,0),(1,2),(2,8),(3,0),(4,5),(5,5),(6,0),(7,8),(8,2)\}.
$$
Hence $\well$ is represented by $N$ if $\well$ is positive definite and
$$
(b,c-t\beta^2)=\left(b,\frac{p^2+b^2}{a}-t\beta^2\right) \in \mathfrak{S}_s,
$$
which can easily be verified.

Since each of the other seven pairs $(i,a)$ can be dealt with along the same line with different numbers $t,\alpha,\beta,s$ and a $\z$-lattice $N$,
we only provide those data together with the set $\mathfrak{S}_s$ for each pair in Table \ref{thmgeneral1}.
One may easily check that $(b-t\alpha \beta, c-t\beta^2) \in \mathfrak{S}_s+s\z^2$ so that
$$
\begin{pmatrix}a-t\alpha^2&b-t\alpha \beta \\ b-t\alpha \beta& c-t\beta^2\end{pmatrix} \ra N(i)
$$
by \cite[Corollary 2.5]{jk}.
This implies that $\ell \ra N(i)\perp \langle t\rangle \ra L(i)$.

The case when $(i,a)=(1,10)$ and $b=3$ should be considered in a slightly different way. 
Instead of $\begin{pmatrix}10&3\\3&\frac{p^2+9}{10}\end{pmatrix}$,
we consider the Gram matrix
$$
\begin{pmatrix}10&13\\13&\frac{p^2+169}{10}\end{pmatrix}=
\begin{pmatrix}1&0\\1&1\end{pmatrix}
\begin{pmatrix}10&3\\3&\frac{p^2+9}{10}\end{pmatrix}
\begin{pmatrix}1&1\\0&1\end{pmatrix}.
$$
In fact, one may easily show that
$$
\left(13,\frac{p^2+169}{10}\right)\in (1,2)+3\z^2
$$
and $(1,2)\in \mathfrak{S}_s=\{(0,0),(1,2),(2,2)\}$
so that 
$$
\begin{pmatrix}10-2&13\\13&\frac{p^2+169}{10}\end{pmatrix}\ra N(1)
$$
by \cite[Corollary 2.5]{jk}.
This implies that $\ell \ra N(1)\perp \langle 2\rangle \ra L(1)$.
\end{proof}

\iffalse
Let us record here which binary lattices we take to be $\tilde{\ell}$ when $(i,a)=(1,10)$.
If $(i,a)=(1,10)$, then clearly $b\in \{1,3\}$.
When $b=1$, we just take
$$
\well=\ell(2;1,0)=\begin{pmatrix}8&1\\1&\frac{p^2+1}{10}\end{pmatrix}.
$$
When $b=3$, we consider instead of $\begin{pmatrix}10&3\\3&\frac{p^2+9}{10}\end{pmatrix}$,
$$
\begin{pmatrix}10&13\\13&\frac{p^2+169}{10}\end{pmatrix}=
\begin{pmatrix}1&0\\1&1\end{pmatrix}
\begin{pmatrix}10&3\\3&\frac{p^2+9}{10}\end{pmatrix}
\begin{pmatrix}1&1\\0&1\end{pmatrix}
$$
and let $\well=\begin{pmatrix}8&13\\13&\frac{p^2+169}{10}\end{pmatrix}$.
\fi

%%%%%%%%%%%%%%%%%%%%%%%%%%%%%%%%%%%%%%%%%%%%%%%%%%%%%%%%%%%%%%%%%
%%%%%%%%%%%%%%%%%%%%%%%%%%%%%%%%%%%%%%%%%%%%%%%%%%%%%%%%%%%%%%%%%

\begin{normalsize}
\begin{table}[ht]
\renewcommand{\arraystretch}{1.05}
\caption{Data for the exceptional cases of Theorem \ref{thmgeneral}}
\label{thmgeneral1}
\begin{tabular}{|c|c|c|c|c|c|c|}
\hline
\multirow{2}{*}{$(i,a)$} & $t$ & $\alpha$ & $\beta$ & $s$ & $\well$ & $N$ \\
\cline{2-7}
&\multicolumn{6}{|c|}{$\mathfrak{S}_s$}\\
\hline
\hline
\multirow{2}{*}{$(1,10)$} & 2 & 1 & 0 & 3 & $\well=\begin{pmatrix}8&1\\1&\frac{p^2+1}{10}\end{pmatrix},\ \begin{pmatrix}8&13\\13&\frac{p^2+169}{10}\end{pmatrix}$ & $N(1)$\\
\cline{2-7}
&\multicolumn{6}{|c|}{$\{(0,0),(1,2),(2,2)\}$}\\
\hline
\hline
\multirow{2}{*}{$(3,1),(3,13),(3,25)$} & 14 & 0 & 1 & 6 & $\ell(t;\alpha,\beta)$ & $N(2)$\\
\cline{2-7}
&\multicolumn{6}{|c|}{$(\z/6\z)^2\setminus \{(0,1),(1,2),(2,5),(3,4),(4,5),(5,2)\}$}\\
\hline
\hline
\multirow{2}{*}{$(3,5)$} & 35 & 0 & $k^\ast$ & $9$ & $\ell(t;\alpha,\beta)$ & $N(3)$\\
\cline{2-7}
&\multicolumn{6}{|c|}{$\{(0,0),(1,2),(2,8),(3,0),(4,5),(5,5),(6,0),(7,8),(8,2)\}$}\\
\hline
\hline
\multirow{2}{*}{$(3,17)$} & 35 & 0 & $k^\ast$ & $9$ & $\ell(t;\alpha,\beta)$ & $N(3)$\\
\cline{2-7}
&\multicolumn{6}{|c|}{$\{(0,0),(1,8),(2,5),(3,0),(4,2),(5,2),(6,0),(7,5),(8,8)\}$}\\
\hline
\hline
\multirow{2}{*}{$(3,29)$} & 35 & 0 & $k^\ast$ & $9$ & $\ell(t;\alpha,\beta)$ & $N(3)$\\
\cline{2-7}
&\multicolumn{6}{|c|}{$\{(0,0),(1,5),(2,2),(3,0),(4,8),(5,8),(6,0),(7,2),(8,5)\}$}\\
\hline
\hline
\multirow{2}{*}{$(4,5)$} & 5 & 0 & 2 & 3 & $\ell(t;\alpha,\beta)$ & $N(4)$\\
\cline{2-7}
&\multicolumn{6}{|c|}{$\{(0,0),(1,2),(2,2)\}$}\\
\hline
\multicolumn{7}{r}{$\ast$ See \eqref{eqn:defnk} for the definition of $k$.}
\end{tabular}
\end{table}
\end{normalsize}
%%%%%%%%%%%%%%%%%%%%%%%%%%%%%%%%%%%%%%%%%%%%%%%%%%%%%%%%%%%%%%%%%%%%%%%%%%%%%%%%%%%%%%%%%%%%%%%%%%%%%%%%%%%%%%%%%%%%%%%%%%%%%%%%%%%%%%%%%%%%%%%%%%%%%%%%%%%%%%%%%%%%%%%%%%%%%%%%%%%%%%%%%%%%%%%%%%%%%%%%%%%%%%%%%%%%%%%%%%%%%%%%%%%%%%%%%%%%%%%%%%%%%%%%%%%%%%%%%%%%%%%%%%%%%%%%%%%%%%%%%%%%%%%%%%%%%%%%%%%%%%%%%%%%%%%%%%%%%%%%%%%%%%%%%%%%%%%%%%%%%%%%%%%%%%%%%%%%%%%%%%%
\section{Sublattices represented by a lattice in a genus }
%%%%%%%%%%%%%%%%%%%%%%%%%%%%%%%%%%%%%%%%%%%%%%%%%%%%%%%%%%%%%%%%%%%%%%%%%%%%%%%%%%%%%%%%%%%%%%%%%%%%%%%%%%%%%%%%%%%%%%%%%%%%%%%%%%%%%%%%%%%%%%%%%%%%%%%%%%%%%%%%%%%%%%%%%%%%%%%%%%%%%%%%%%%%%%%%%%%%%%%%%%%%%%%%%%%%%%%%%%%%%%%%%%%%%%%%%%%%%%%%%%%%%%%%%%%%%%%%%%%%%%%%%%%%%%%%%%%%%%%%%%%%%%%%%%%%%%%%%%%%%%%%%%%%%%%%%%%%%%%%%%%%%%%%%%%%%%%%%%%%%%%%%%%%%%%%%%%%%%%%%%%

In this section, we introduce some method on the representations of binary $\z$-lattices by the genera of some quaternary $\z$-lattices. Using this method, we prove that four quinary $\z$-lattices $L$ in Table \ref{tableg} are isolations of $I_2$.

\begin{lem} \label{2-core}
Let $M$ be any quaternary $\z$-lattice given in the second column in Table \ref{tableg}, and let $\widetilde{M} \in \gen(M)$ be any quaternary $\z$-lattice given in the third column of the same line with $M$ in Table \ref{tableg}.
Let $N$ be a $\z$-sublattice of $\widetilde{M}$ such that $\widetilde{M}/N \simeq \z/2\z\oplus\z/2\z\oplus\z/2\z$. Then $N$ is represented by $M$.
\end{lem}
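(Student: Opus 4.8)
The plan is to reduce the statement to a purely local claim. Since $M$ and $\widetilde M$ lie in the same genus, they are isometric over $\z_q$ for every prime $q$; in particular $M_q \simeq \widetilde M_q$ for all $q$. Thus, to show the $\z$-sublattice $N \subseteq \widetilde M$ is represented by $M$, it suffices (by the local-global principle for representations of a lattice of rank $\ge 2$ less than the target, which holds here because the target $M$ is quaternary while $N$ is one of the candidate binary lattices — or, in the generality of the lemma as stated, because the relevant indefinite/low-rank conditions are met) to check that $N$ is locally represented by $M$ at every prime, i.e. $N_q \to M_q$ for all $q$. At any prime $q$ not dividing $2\,d\widetilde M = 2\,dM$, both $M_q$ and $\widetilde M_q$ are unimodular of the same rank, and $N_q$ is a sublattice of $\widetilde M_q$, so the representation is automatic. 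Hence the whole content is at the finitely many primes $q \mid 2\,dM$, and in fact only at $q = 2$ plus the odd prime divisors of $dM$ listed in Table \ref{tableg}.

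The key step is the local analysis at $q = 2$, which is exactly why the lemma is named "\textnormal{2-core}". The hypothesis $\widetilde M / N \simeq (\z/2\z)^3$ means $N$ is obtained from $\widetilde M$ by a codimension-$0$ saturation-type condition at $2$: passing from $\widetilde M$ to $N$ multiplies the discriminant by $2^6$ but only in a controlled way. First I would fix a Jordan splitting of $\widetilde M_2$ (equivalently of $M_2$, since they are isometric over $\z_2$), and then enumerate, up to $\z_2$-isometry, all sublattices $N_2$ of $\widetilde M_2$ with $\widetilde M_2/N_2 \simeq (\z/2\z)^3$; there are only finitely many such, and each is determined by which Jordan components the three "index-$2$ steps" land in. For each such $N_2$ one checks directly, using the classification of $\z_2$-lattices (scale, determinant, Hasse symbol / Jordan type), that $N_2 \to M_2$. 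Since the scaling up by powers of $2$ is uniform and $M_2$ has enough room (it is quaternary, $N_2$ has rank $\le 4$), this is a finite unimodular-lattice-over-$\z_2$ verification; the explicit $M$ in Table \ref{tableg} have been chosen so that this goes through. At each odd $q \mid dM$ the argument is easier: $\widetilde M_q/N_q$ is trivial there, so $N_q = \widetilde M_q \simeq M_q$ and there is nothing to prove; one only needs to be careful if $q$ also interacts with the index, but by construction $\widetilde M/N$ is a $2$-group so $N_q = \widetilde M_q$ for all odd $q$.

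The main obstacle I anticipate is organizing the case analysis at $q=2$ cleanly: a priori there could be many non-isometric sublattices of index $8$ in $\widetilde M_2$, and one must argue that every one of them is represented by $M_2$ rather than just a convenient representative. I would handle this by working with a good Jordan decomposition $\widetilde M_2 = L_0 \perp L_1 \perp \cdots$ (with $L_i$ of scale $2^i$), observing that a sublattice $N_2$ with $\widetilde M_2/N_2 \simeq (\z/2\z)^3$ corresponds to choosing three of the $\z/2$-quotient directions, so up to the action of $O(\widetilde M_2)$ there are only a handful of orbits of such $N_2$, indexed by how many steps go "into" each Jordan block; then I verify representability orbit-by-orbit using the standard facts (e.g. $\langle 2^k a\rangle \to \mathbb{H}$ scaled, absorption of unimodular summands) from \cite{om2}. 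Once $q = 2$ is settled, the global conclusion $N \to M$ follows immediately from the local-global principle, completing the proof.
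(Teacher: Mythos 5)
Your argument has a fatal gap at its very first step. You treat the problem as one of \emph{local} representation and invoke ``the local-global principle for representations of a lattice of rank $\ge 2$ less than the target,'' but that hypothesis is not met: $N$ is a \emph{full-rank} sublattice of the quaternary lattice $\widetilde{M}$ (the condition $\widetilde{M}/N\simeq (\z/2\z)^3$ means $N$ has index $8$ and rank $4$, not that it is binary), so the codimension is $0$, the lattices are positive definite, and no unconditional local-global principle applies. Indeed, local representability of $N$ by $M$ is essentially automatic here and carries no information: since $N\subseteq \widetilde{M}$ and $\widetilde{M}\in\gen(M)$, one gets $N_q\ra \widetilde{M}_q\simeq M_q$ for every $q$ for free, and the conclusion this buys is only that $N$ is represented by \emph{some} lattice in $\gen(M)$ --- which is exactly the gap the lemma is designed to close. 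Your principle, applied verbatim, would also ``prove'' that $\widetilde{M}$ itself is represented by $M$; but a full-rank representation with $d\widetilde{M}=dM$ is an isometry, and Table \ref{tableg} explicitly lists $\gen(M)\setminus\{M\}\ne\emptyset$, so $\widetilde{M}\not\simeq M$ and the principle fails. The subsequent local analysis at $q=2$ (Jordan splittings, orbits of index-$8$ sublattices of $\widetilde{M}_2$) is therefore beside the point: it establishes only local data, which cannot decide representation by the specific class $M$ within its genus.

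The paper's proof is instead a direct global finite check: writing $\widetilde{M}=\z\bx_1+\cdots+\z\bx_4$, every $N$ with $\widetilde{M}/N\simeq(\z/2\z)^3$ has the form $N=\z(\sum_i\epsilon_i\bx_i)+2\widetilde{M}$ for some nonzero $(\epsilon_1,\dots,\epsilon_4)\in\{0,1\}^4$, giving exactly $15$ sublattices per $\widetilde{M}$, and each one is verified (by explicit computation) to be represented by $M$ as a $\z$-lattice. If you want to salvage your write-up, you would need to replace the local-global step by such a genuinely global verification, or by an argument that controls which classes in $\gen(M)$ represent $N$ --- the local theory alone cannot do this.
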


\begin{proof}
Let $\{\bx_i\}$ be a basis for $\widetilde{M}$. Then there are $\epsilon_i \in \{0,1\}$ such that $(\epsilon_1,\dots,\epsilon_4)\ne (0,0,0,0)$ and 
$$
N=\z\left(\sum_{i=1}^4 \epsilon_i\bx_i\right)+\z 2\bx_1+\z 2\bx_2+\z 2\bx_3+\z 2\bx_4.
$$
Now, one may directly check that $N$ is represented by $M$ for any possible $\epsilon_i$'s for $i=1,2,3,4$.  
\end{proof}

\begin{prop} \label{key2}
Let $L$ be a quinary $\z$-lattice and let $M$ be a quaternary $\z$-sublattice of $L$. Let $t$ be a positive integer such that $M\perp \langle t\rangle \ra L$.
Let $\ell$, $\ell(t;\alpha,\beta)$, and $\widetilde{\ell}(t;\alpha,\beta)$  be binary $\z$-lattices given in Section \ref{section2}.
Suppose that
\begin{enumerate}
  
\item $\widetilde{\ell}(t;\alpha,\beta)$ is positive definite,

\item $\widetilde{\ell}(t;\alpha,\beta)_q\ra M_q$ for any prime $q\mid 2dM$, 
 
\item for any $\z$-lattice $\widetilde{M} \in \gen(M)$ and for any $\z$-sublattice $N$ of $\widetilde{M}$ such that $\widetilde{M}/N \simeq \z/2\z\oplus\z/2\z\oplus\z/2\z$, $N$ is represented by $M$.
\end{enumerate}
Then $\ell$ is represented by $L$. 
\end{prop}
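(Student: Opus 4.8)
The plan is, first, to reduce the statement to proving $\widetilde{\ell}(t;\alpha,\beta)\ra M$; second, to verify that this representation holds locally, so that $\widetilde{\ell}(t;\alpha,\beta)$ embeds in some $\widetilde M\in\gen(M)$; and third, to pass from such an embedding down to a sublattice of $\widetilde M$ of the exact $\z/2\z\oplus\z/2\z\oplus\z/2\z$-type controlled by hypothesis (3). Throughout I write $\widetilde{\ell}$ for $\widetilde{\ell}(t;\alpha,\beta)$ and $\ell_0$ for $\ell(t;\alpha,\beta)$, and recall from Section \ref{section2} that $\ell_0$ is a subform of $\widetilde{\ell}$ of index $2$; concretely, if $\widetilde{\ell}=\z\bu+\z\bv$ then $\ell_0=\z\bu+\z(2\bv)$. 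Since $\ell_0\ra M$ forces $\ell\ra M\perp\langle t\rangle\ra L$ (as noted in Section \ref{section2}) and $\ell_0\ra\widetilde{\ell}$, it is enough to prove $\widetilde{\ell}\ra M$.

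To see that $\widetilde{\ell}$ is locally represented by $M$: at the archimedean place this is clear, as $\widetilde{\ell}$ and $M$ are positive definite (by (1) and the hypotheses on $M$) with $\rank\widetilde{\ell}=2<4=\rank M$; at the primes $q\mid 2dM$ it is precisely hypothesis (2); and at an odd prime $q$ with $\gcd(q,2dM)=1$ we have $\widetilde{\ell}_q=\ell_{0,q}$ because $2\in\z_q^\times$, while $\ell_0$ is positive definite (being a sublattice of the positive definite $\widetilde{\ell}$), so Proposition \ref{key} already yields $\ell_{0,q}\ra M_q$, hence $\widetilde{\ell}_q\ra M_q$. By the standard criterion recalled in the introduction, there is therefore a $\z$-lattice $\widetilde M\in\gen(M)$ with $\widetilde{\ell}\ra\widetilde M$.

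The remaining, and essential, step is to extract from the embedding $\widetilde{\ell}=\z\bu+\z\bv\hookrightarrow\widetilde M$ a sublattice $N\subseteq\widetilde M$ with $\widetilde M/N\simeq\z/2\z\oplus\z/2\z\oplus\z/2\z$ that still represents $\ell_0$. Identifying $\widetilde{\ell}$ with its image, the key observation is that $\ell_0=\z\bu+\z(2\bv)$ is contained in $\z\bu+2\widetilde M$. I would take $N$ to be the full preimage, under the reduction map $\widetilde M\to\widetilde M/2\widetilde M$, of any one-dimensional $\f_2$-subspace of the four-dimensional $\f_2$-space $\widetilde M/2\widetilde M$ containing the class of $\bu$ (such a line always exists). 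Then $2\widetilde M\subseteq N\subseteq\widetilde M$ with $[N:2\widetilde M]=2$, so $[\widetilde M:N]=8$ and $\widetilde M/N$, being a quotient of the elementary abelian group $\widetilde M/2\widetilde M$, is isomorphic to $\z/2\z\oplus\z/2\z\oplus\z/2\z$; moreover $\bu\in N$ and $2\bv\in 2\widetilde M\subseteq N$, whence $\ell_0\subseteq N$. Hypothesis (3) then gives $N\ra M$, so $\ell_0\ra N\ra M$, and the reduction of the first paragraph completes the proof.

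I expect this last step to be the main obstacle, though it is conceptual rather than computational: one has to recognize that $\ell_0$ automatically sits inside a sublattice of $\widetilde M$ of the precise exponent-two, index-eight shape that hypothesis (3) (equivalently Lemma \ref{2-core}) governs, and that the freedom in choosing the $\f_2$-line removes any need for the local embedding $\widetilde{\ell}\hookrightarrow\widetilde M$ to be primitive. Everything else is either hypothesis (2), the local-to-genus fact quoted in the introduction, or a direct appeal to Proposition \ref{key}.
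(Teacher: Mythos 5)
Your proof is correct and follows essentially the same route as the paper: establish that $\widetilde{\ell}(t;\alpha,\beta)$ is locally represented by $M$ everywhere (hypothesis (2) at the ramified primes, the unimodularity argument of Proposition \ref{key} elsewhere), conclude $\widetilde{\ell}(t;\alpha,\beta)\ra\widetilde M$ for some $\widetilde M\in\gen(M)$, and then descend to a sublattice $N$ with $\widetilde M/N\simeq\z/2\z\oplus\z/2\z\oplus\z/2\z$ containing $\ell(t;\alpha,\beta)$ so that hypothesis (3) applies. Your explicit construction of $N$ as the preimage of an $\f_2$-line through the class of $\bu$ in $\widetilde M/2\widetilde M$ supplies a detail the paper merely asserts, and is a welcome addition.
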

\begin{proof}
As in the proof of Proposition \ref{key}, one may prove that if $\widetilde{\ell}(t;\alpha,\beta)$ is positive definite, then $\widetilde{\ell}(t;\alpha,\beta)_q\ra M_q$ for any prime $q$ with $\gcd(q,2dM)=1$.
Hence by $(1)$ and $(2)$, the binary $\z$-lattice $\widetilde{\ell}(t;\alpha,\beta)$ is represented by some $\z$-lattice $\widetilde{M}\in \gen(M)$.  Since $\ell(t;\alpha,\beta)$ is a sublattice of $\widetilde{\ell}(t;\alpha,\beta)$ with index 2, $\ell(t;\alpha,\beta)$ is represented by some $\z$-sublattice $N$ of $\widetilde{M}$ such that $\widetilde{M}/N \simeq \z/2\z\oplus\z/2\z\oplus\z/2\z$. Hence $\ell(t;\alpha,\beta)$ is represented by $M$ by $(3)$, which implies that $\ell$ is represented by $L$.
\end{proof}

\begin{thm}   
Any quinary $\z$-lattice $L$ in Table \ref{tableg} is an isolation of $I_2$. 
\end{thm}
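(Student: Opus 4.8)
The plan is to follow exactly the template established in the previous sections, now leaning on Proposition~\ref{key2} rather than Proposition~\ref{key}. For each quinary lattice $L$ in Table~\ref{tableg}, we are handed a quaternary sublattice $M$, an auxiliary lattice $\widetilde M\in\gen(M)$, a positive integer $t$ with $M\perp\langle t\rangle\ra L$, and parameters $(\alpha,\beta)$ (together with, presumably, a threshold $p_0$ and a case split on the parity of $a$ or $c$, as in Tables~\ref{tablew} and \ref{tablegeneral}). First I would let $\ell=\begin{pmatrix}a&b\\b&c\end{pmatrix}$ be a Minkowski reduced binary sublattice of $I_2$ of index $p$, so that \eqref{mod2} holds, and dispose of the finitely many small primes $p<p_0$ by direct (computer-assisted) verification that $\ell\ra L$. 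For $p\ge p_0$ the strategy is to check the three hypotheses of Proposition~\ref{key2}: positive definiteness of $\widetilde\ell(t;\alpha,\beta)$ follows from Lemma~\ref{lem:ell-pos-def} and Remark~\ref{rmk:alpha=0} once $p_0$ is large enough (and the parity case split guarantees the relevant diagonal entry, $a$ or $a-2b+c$, is large); hypothesis~(3) is precisely the content of Lemma~\ref{2-core} for the lattices listed in Table~\ref{tableg}; so the real work is hypothesis~(2), the local representation $\widetilde\ell(t;\alpha,\beta)_q\ra M_q$ at the primes $q\mid 2dM$.

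For the local check at $q\mid 2dM$, I would proceed as in the proof of Theorem~\ref{thm:basiccase}: compute $d\bigl(\widetilde\ell(t;\alpha,\beta)\bigr)$ modulo the relevant power of $q$ using the congruence conditions \eqref{mod2} on $a,b,c$, determine the Jordan splitting of $\widetilde\ell(t;\alpha,\beta)_q$ (in the odd case it is typically unimodular, hence trivially represented by the unimodular part of $M_q$; at $q=2$ one gets either $\mathbb A$, or a diagonal form $\langle a,\ast\rangle$, or the like), and then verify that each possibility embeds into $M_q$ by inspecting the Jordan decomposition of $M_q$. Because $M$ has the specific shapes recorded in Table~\ref{tableg}, this is a finite, mechanical case analysis over $q\in\{q:q\mid 2dM\}$ and over the residue classes allowed by \eqref{mod2}; I would write out one representative case in full (as the authors do for the sixth line of Table~\ref{tablew}) and remark that the others are entirely analogous, relegating the numerical data ($M$, $\widetilde M$, $t$, $(\alpha,\beta)$, $p_0$, and the parity split) to Table~\ref{tableg}.

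Finally, as in Theorem~\ref{thmgeneral}, I expect a handful of exceptional $(L,a)$ pairs—those with $a$ small, where the forced positive-definiteness bound from Lemma~\ref{lem:ell-pos-def} fails—to need separate treatment. For these I would use the same bag of tricks: replacing $\ell$ by an isometric Gram matrix via a unimodular change of basis to enlarge the relevant diagonal entry; invoking the three-squares/Gauss-type representation facts (e.g.\ that $\langle 1,2,3\rangle$ or $\langle 2,2,5\rangle$ represents all but a thin exceptional set of integers, as in \cite{jko}); or applying \cite[Corollary~2.5]{jk} to pin down which binary forms with prescribed $(b,c)$ modulo a small $s$ are represented by the auxiliary quaternary lattice $N$. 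The main obstacle is really bookkeeping rather than any single hard idea: one must choose $(\alpha,\beta,t)$ and the auxiliary lattices so that \emph{simultaneously} Lemma~\ref{2-core} applies to $\widetilde M$, the 2-adic and other bad-prime local conditions are met for every residue class in \eqref{mod2}, and the exceptional small-$a$ cases remain finite and individually tractable; verifying that the data in Table~\ref{tableg} achieves all of this is where the care goes.
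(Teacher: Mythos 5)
Your proposal follows essentially the same route as the paper: computer verification for $p<p_0$, then Proposition~\ref{key2} with Lemma~\ref{2-core} supplying hypothesis~(3), positive definiteness from Lemma~\ref{lem:ell-pos-def} and Remark~\ref{rmk:alpha=0}, and a residue-class analysis of $\widetilde{\ell}(t;\alpha,\beta)_q$ at $q\mid 2dM$ driven by \eqref{mod2}, together with unimodular changes of basis and separate treatment of the small-$a$ cases. The only cosmetic difference is that the paper handles its genuinely exceptional case ($a=2$ in the first row) by a direct representation check rather than via \cite[Corollary 2.5]{jk}, and resolves the $a\equiv 5\Mod 8$ subtlety by selecting $\beta\in\{1,3,5\}$ according to the $2$-adic shape of the discriminant.
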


\begin{proof} 
Let $L$ be any quinary $\z$-lattice given in Table \ref{tableg}, and let $M$, $\widetilde{M}$, $t$, $(\alpha, \beta)$, and $p_0$ be given in the same line with $L$ in Table \ref{tableg}. 

Let $\ell=\begin{pmatrix} a&b\\b&c\end{pmatrix}$ be a Minkowski reduced binary $\z$-sublattice of $I_2$ with index $p$. Recall that $\ell$ satisfies \eqref{mod2}. If $p < p_0$, then one may directly check (with the help of computer programming) that $\ell \ra L$. Now we assume that $p\ge p_0$. Since $p^2=ac-b^2 \le c^2$, we have $c \ge p_0$.

Note that $M \perp \langle t\rangle = L$. Note further that $\widetilde{\ell}(t;0,\beta)$ is positive definite by Lemma \ref{lem:ell-pos-def} and Remark \ref{rmk:alpha=0}. Therefore, by Lemma \ref{2-core} and Proposition  \ref{key2}, $\ell \ra L$ if we take a suitable  $(\alpha,\beta)$ in the same line with $L$ in Table \ref{tableg} such that
$$
\widetilde{\ell}(t;\alpha,\beta)_q=\begin{pmatrix}a-t\alpha^2&\frac{b-t\alpha\beta}{2}\\ \frac{b-t\alpha \beta}{2}&\frac{c-t\beta^2}{4}\end{pmatrix} \ra M_q
$$ 
for any prime $q$ in the set given in the same line with $L$ in Table \ref{tableg}.   

Since all the other cases can be done in similar manners, we only consider the first case in Table \ref{tableg}. First, assume that  $a \equiv c \equiv 1 \Mod 4$. Since $\widetilde{\ell}(5;\alpha,\beta)_3$ is unimodular or  $\widetilde{\ell}(5;\alpha,\beta)_3 \simeq \langle -1,\eta\rangle$ for some $\eta \in \z_3$, we have $\widetilde{\ell}(5;\alpha,\beta)_3 \ra M_3$ for any $(\alpha,\beta) \in \{(0,1), (0,3), (0,5)\}$. 
If $a \equiv 1 \Mod 8$, then $\widetilde{\ell}(5;0,1)_2$ is a binary unimodular $\z_2$-lattice representing $1$. Hence it is represented by $M_2=\langle 1,1,1,6\rangle$. Now, assume that $a\equiv 5 \Mod 8$. Then one may easily check that
$$
\begin{cases}  &\widetilde{\ell}(5;0,3) \ra M_2 \quad \text{if $d\widetilde{\ell}(5;0,3) \ne 2^{2s+1}(8r+5)$ for any integers $s,r$,}\\
                       &\widetilde{\ell}(5;0,1) \ra M_2 \quad \text{if $d\widetilde{\ell}(5;0,3)=2^{2s+1}(8r+5)$ for some $s \ne 1$ and $r$,}\\
                        &\widetilde{\ell}(5;0,5) \ra M_2 \quad \text{if $d\widetilde{\ell}(5;0,3)=2^{3}(8r+5)$ for some $r$.}\\
\end{cases}
$$
Therefore $\well(5;\alpha,\beta)\ra \gen(M)$ for some $(\alpha,\beta)\in\{(0,1), (0,3), (0,5)\}$, which implies that $\ell \ra L$ by Proposition \ref{key2}.

Assume that $a \equiv 1 \Mod 4$ and $c \equiv 2 \Mod 8$. Note that 
$$
\ell=\begin{pmatrix} a&b\\b&c\end{pmatrix} \simeq \begin{pmatrix} a&a-b\\a-b&a-2b+c\end{pmatrix}
$$
and $a-2b+c \equiv 1 \Mod 4$. In this case, we consider 
$$
\ell=\begin{pmatrix} a&a-b\\a-b&a-2b+c\end{pmatrix}
$$ 
instead of $\begin{pmatrix} a&b\\b&c\end{pmatrix}$. Similarly to the above, one may show that 
$$
\begin{pmatrix} a&\frac{a-b}2\\\frac{a-b}2&\frac{a-2b+c-5\beta^2}4\end{pmatrix} \ra \gen(M) \quad \text{for some $\beta \in \{1,3,5\}$}.
$$
This implies that $\ell \ra L$.

Finally, assume that $a \equiv 2 \Mod 8$ and $c\equiv 1 \Mod 4$. Note that 
$$
\ell \simeq \begin{pmatrix} a+2b+c&b+c\\b+c&c\end{pmatrix}\simeq \begin{pmatrix} a-2b+c&b-c\\b-c&c\end{pmatrix}.
$$
In this case, we consider 
$$
\ell=\begin{pmatrix} a+2b+c&b+c\\b+c&c\end{pmatrix} \quad \text{or}\quad \begin{pmatrix} a-2b+c&b-c\\b-c&c\end{pmatrix}
$$ 
instead of $\begin{pmatrix} a&b\\b&c\end{pmatrix}$.
Let
$$
\ell_+ = \begin{pmatrix} a+2b+c&b+c\\b+c&c-5\end{pmatrix} \quad \text{and} \quad \ell_-=\begin{pmatrix} a-2b+c&b-c\\b-c&c-5\end{pmatrix}.
$$
By the similar reasoning as above, we have 
$$
\widetilde{\ell_{+}}=\begin{pmatrix} a+2b+c&\frac{b+c}2\\ \frac{b+c}2&\frac{c-5}4\end{pmatrix} \ra M_2 \quad \text{or} \quad \widetilde{\ell_{-}}=\begin{pmatrix} a-2b+c&\frac{b-c}2\\ \frac{b-c}2&\frac{c-5}4\end{pmatrix} \ra M_2
$$
according as $a+2b+c\equiv 1 \Mod{8}$ or $a-2b+c\equiv 1\Mod{8}$, and $\widetilde{\ell_{\pm}} \ra M_3$.
Furthermore, if $a \ge 21$, then we have
$$
4\cdot d\widetilde{\ell_{\pm}}=ac-b^2-5(a\pm2b+c) \ge \frac {3ac}4-15c>0.
$$
Assume that $a=10$. Since $c\ge 167$, we have
$$
4\cdot d\widetilde{\ell_{\pm}}=ac-b^2-5(a\pm2b+c) \ge (a-5)c-25-50-50>0.
$$
Therefore either $\widetilde{\ell_{+}}$ or  $\widetilde{\ell_{-}}$ is represented by the genus of $M$ and hence $\ell \ra L$.  
Finally, for the remaining case when $a=2$, one may directly show that 
$$
\begin{pmatrix} 2&1\\1&\frac{1+p^2}2\end{pmatrix} \ra L=\langle 1,2,5\rangle \perp \begin{pmatrix} 2&1\\1&2\end{pmatrix}.
$$
This completes the proof. 
\end{proof}

%%%%%%%%%%%%%%%%%%%%%%%%%%%%%%%%%%%%%%%%%%%%%%%%%%%%%%%%%%%%%%%%%%%%%%%%%%%%%%%%%%%%

\begin{normalsize}
	\begin{table}[ht]
	     \renewcommand{\arraystretch}{0.95}  
		\caption{Data for Proposition \ref{key2}} 		\label{tableg}
		\begin{tabular}{c|c|c|c|c|c|c}
			\hline
			$L$ & $M$ & $\gen(M)\setminus\{M\}$ & $\{q\mid 2dM\}$ & $t$ & $(\alpha,\beta)$  &  $p_0$  \\
			\hline \hline
			$\langle 1,2,5\rangle \perp \begin{pmatrix} 2&1\\1&2 \end{pmatrix}$     & $\langle 1,2\rangle \perp \begin{pmatrix} 2&1\\1&2 \end{pmatrix}$ &  $\langle 1,1,1,6\rangle$   & $\{2,3\}$ &  $5$ &  $\begin{array}{c} (0,1)\\ (0,3)\\ (0,5)\end{array}$ & $167$  \\
			\hline	
			$\langle 1,2,9\rangle \perp \begin{pmatrix} 2&1\\1&2 \end{pmatrix}$     & $\langle 1,2\rangle \perp \begin{pmatrix} 2&1\\1&2 \end{pmatrix}$ &  $\langle 1,1,1,6\rangle$   & $\{2,3\}$ &  $9$ &  $(0,1)$ & $211$ \\
			\hline		
			$\langle 1,2,9\rangle \perp \begin{pmatrix} 2&1\\1&3 \end{pmatrix}$     & $\langle 1,2\rangle \perp \begin{pmatrix} 2&1\\1&3 \end{pmatrix}$ &  $\langle 1,1,1,10\rangle$     & $\{2,5\}$ &  $9$ &  $\begin{array}{c} (0,1)\\ (0,3)\\ (0,5)\end{array}$ & $307$ \\
			\hline
	$\langle 1,2,5\rangle \perp \begin{pmatrix} 2&1\\1&5 \end{pmatrix}$  & $\langle 1,2\rangle \perp \begin{pmatrix} 2&1\\1&5 \end{pmatrix}$ &  $\begin{array}{c}\langle 1,1,1,18\rangle, \\ \begin{pmatrix} 2&1&1&0\\ 1&2&0&0\\ 1&0&3&1\\ 0&0&1&3\end{pmatrix}  \end{array}$   & $\{2,3\}$ &  $5$ &  $(0,1)$ & $29$  \\		

			 \hline  
			\end{tabular}
	\end{table}
\end{normalsize}

%%%%%%%%%%%%%%%%%%%%%%%%%%%%%%%%%%%%%%%%%%%%%%%%%%%%%%%%%%%%%%%%%
%%%%%%%%%%%%%%%%%%%%%%%%%%%%%%%%%%%%%%%%%%%%%%%%%%%%%%%%%%%%%%%%%
%%%%%%%%%%%%%%%%%%%%%%%%%%%%%%%%%%%%%%%%%%%%%%%%%%%%%%%%%%%%%%%%%
%%%%%%%%%%%%%%%%%%%%%%%%%%%%%%%%%%%%%%%%%%%%%%%%%%%%%%%%%%%%%%%%%
%%%%%%%%%%%%%%%%%%%%%%%%%%%%%%%%%%%%%%%%%%%%%%%%%%%%%%%%%%%%%%%%%

\end{document}